\definecolor{myorange}{RGB}{180,90,0}
\definecolor{mygreen}{RGB}{70,140,0}
\def\wrtext#1{\relax\ifmmode{\leavevmode\hbox{#1}}\else{#1}\fi}
\def\abs#1{\left|#1\right|}
\def\begeq{\begin{equation}}
\def\endeq{\end{equation}}
\def\part#1{\frac{\partial}{\partial #1}}
\def\norm#1{||\,#1\,||}
\newcommand{\real}{\mbox{\bf R}}
\newcommand{\comp}{\mbox{\bf C}}
\newcommand{\nat}{\mbox{\bf N}}
\renewcommand{\exp}{\mbox{\rm exp\,}}
\newtheorem{dref}{Definition}[section]
\newtheorem{lemma}[dref]{Lemma}
\newtheorem{theo}[dref]{Theorem}
\newtheorem{prop}[dref]{Proposition}
\newenvironment{proof}{\vspace{.3cm}\noindent{{\em Proof:}}}{\hfill$\Box$}
\title{Complex FIOs and composition of Toeplitz operators}
\author{Lewis \textsc{Coburn} \footnote{Department of Mathematics, SUNY at Buffalo, Buffalo, NY 14260, USA, {\sf lcoburn@buffalo.edu}}\and Michael \textsc{Hitrik} \footnote{Department of Mathematics, UCLA, Los Angeles CA 90095-1555, USA, {\sf hitrik@math.ucla.edu}} \and \and Johannes \textsc{Sj\"ostrand}\footnote{IMB, Universit\'e de Bourgogne 9, Av. A. Savary, BP 47870
FR-21078 Dijon, France and UMR 5584 CNRS, {\sf johannes.sjostrand@u-bourgogne.fr}}}
\date{}
\begin{document}
\maketitle

\vspace*{1cm}
\noindent
{\bf Abstract}: We study Toeplitz operators on the Bargmann space, with Toeplitz symbols that are exponentials of complex quadratic forms, from the point of view of Fourier integral operators in the complex domain. Sufficient conditions are established for the composition of two such operators to be a Toeplitz operator.

\tableofcontents
\section{Introduction and statement of results}
\setcounter{equation}{0}
\label{sec_introduction}
Various algebras of pseudodifferential and Fourier integral operators have long played a fundamental role in PDE and applications~\cite{H_book},~\cite{Sj94},~\cite{Zw12},~\cite{CoGrNiRo}. Parallel developments in the setting of Toeplitz operators on exponentially weighted spaces of holomorphic functions on $\comp^n$ (Bargmann spaces), motivated in part by the general ideas of quantization~\cite{Be74},~\cite{LC94}, are comparatively more recent, and may present some surprises~\cite{BC94},~\cite{LC01}. In the recent series of papers~\cite{CoHiSj},~\cite{CoHiSjWh},~\cite{CoHiSj23}, certain links have been established between the theory of Toeplitz operators on the Bargmann space and Fourier integral operators (FIOs) in the complex domain. In particular, the point of view of complex FIOs has been used in those works to establish the validity of the Berger-Coburn conjecture~\cite{BC94}, stating that a Toeplitz operator is bounded precisely when its Weyl symbol is bounded, for Toeplitz symbols that are exponentials of complex quadratic forms.

\medskip
\noindent
A basic, still unresolved issue concerns the problem of composing Toeplitz operators~\cite{LC19}, and the work~\cite{LC01} gives an explicit example of a bounded Toeplitz operator on the Bargmann space, whose square cannot be approximated by bounded Toeplitz operators in the operator norm. Now the Toeplitz symbol of the operator in the example of~\cite{LC01} is given by an exponential of a complex quadratic form, which makes it natural therefore to apply the machinery developed in~\cite{CoHiSj},~\cite{CoHiSjWh},~\cite{CoHiSj23}, to a systematic study of the composition of such metaplectic Toeplitz operators acting on the Bargmann space. The purpose of this paper is precisely to carry out such a study and to derive sufficient conditions for the composition of two metaplectic Toeplitz operators to be an operator of the same class. In addition to placing the example of~\cite{LC01} into a more conceptual framework, it seems that by doing so, one gains some additional insight into the analytic structure of the space of metaplectic Toeplitz operators, while also clarifying the link to the Weyl quantization. Let us now proceed to describe the assumptions and state the main results of this work.

\bigskip
\noindent
Let $\Phi_0$ be a strictly plurisubharmonic quadratic form on $\comp^n$ and let us introduce the Bargmann space
\begeq
\label{eq1.1}
H_{\Phi_0}(\comp^n) = L^2(\comp^n, e^{-2\Phi_0} L(dx)) \cap {\rm Hol}(\comp^n),
\endeq
with $L(dx)$ being the Lebesgue measure on $\comp^n$. We have the orthogonal projection
\begeq
\label{eq1.2}
\Pi_{\Phi_0}: L^2(\comp^n,e^{-2\Phi_0} L(dx)) \rightarrow H_{\Phi_0}(\comp^n).
\endeq

\medskip
\noindent
Let $q$ be a complex valued quadratic form on $\comp^n$ and assume that
\begeq
\label{eq1.3}
{\rm Re}\, q(x) < \Phi_{\rm herm}(x) := (1/2)\left(\Phi_0(x) + \Phi_0(ix)\right),\quad x \neq 0.
\endeq
In this work, we shall be concerned with (bounded) Toeplitz operators of the form
\begeq
\label{eq1.4}
{\rm Top}(e^q) = \Pi_{\Phi_0} \circ e^{q} \circ \Pi_{\Phi_0}: H_{\Phi_0}(\comp^n) \rightarrow H_{\Phi_0}(\comp^n).
\endeq
Any such operator can be represented as the Weyl quantization,
\begeq
\label{eq1.4.1}
{\rm Top}(e^q) = a^w(x,D_x),
\endeq
see~\cite{Sj95},~\cite{Zw12}, where the Weyl symbol $a$ is given by
\begeq
\label{eq1.4.2}
a\left(x,\xi\right)  = \left(\exp\left(\frac{1}{4} \left(\Phi''_{0,x\overline{x}}\right)^{-1} \partial_x \cdot \partial_{\overline{x}}\right)e^q\right)(x), \quad (x,\xi) \in \Lambda_{\Phi_0}.
\endeq
Here we have introduced the real linear subspace
\begeq
\label{eq1.5}
\Lambda_{\Phi_0} = \left\{\left(x,\frac{2}{i}\frac{\partial \Phi_0}{\partial x}(x)\right), \, x\in \comp^n\right\} \subset \comp^n_x \times \comp^n_{\xi} = \comp^{2n},
\endeq
which is I-Lagrangian and R-symplectic, in the sense that the restriction of the complex symplectic form on $\comp^{2n}$ to $\Lambda_{\Phi_0}$ is real and non-degenerate. In particular, $\Lambda_{\Phi_0}$ is maximally totally real, so that its complexification is given by $\comp^{2n}$.

\medskip
\noindent
As observed in~\cite{CoHiSj}, an application of the method of quadratic stationary phase to (\ref{eq1.4.2}) allows us to write,
\begeq
\label{eq1.7}
a(x,\xi) = C\, \exp(i F(x,\xi)), \quad (x,\xi) \in \Lambda_{\Phi_0},
\endeq
for some constant $C\neq 0$, where $F$ is a holomorphic quadratic form on $\comp^{2n}$. We let
\begeq
\label{eq1.8}
{\cal F}= \frac{1}{2} \begin{pmatrix}F''_{\xi x} &F''_{\xi \xi }\\
-F''_{xx} &-F''_{x\xi }\end{pmatrix}
\endeq
be the fundamental matrix of $F$ and assume, following~\cite{CoHiSj}, that $\pm 1 \notin {\rm Spec}({\cal F})$.

\bigskip
\noindent
Let $\widetilde{q}$ be a second complex valued quadratic form on $\comp^n$ satisfying, similarly to (\ref{eq1.3}),
\begeq
\label{eq1.9}
{\rm Re}\, \widetilde{q}(x) < \Phi_{\rm herm}(x),\quad x \neq 0.
\endeq
Letting $\widetilde{a}\in C^{\infty}(\Lambda_{\Phi_0})$ be the Weyl symbol of the Toeplitz operator ${\rm Top}(e^{\widetilde{q}})$, let us write as in (\ref{eq1.7}),
\begeq
\label{eq1.10}
\widetilde{a}(x,\xi) = \widetilde{C}\, \exp(i\widetilde{F}(x,\xi)), \quad (x,\xi) \in \Lambda_{\Phi_0},
\endeq
for a holomorphic quadratic form $\widetilde{F}$ on $\comp^{2n}$ and a constant $\widetilde{C} \neq 0$. Let $\widetilde{{\cal F}}$ be the fundamental matrix of $\widetilde{F}$ and assume that $\pm 1 \notin {\rm Spec}(\widetilde{{\cal F}})$.

\bigskip
\noindent
We shall assume that the Weyl symbols of ${\rm Top}(e^q)$, ${\rm Top}(e^{\widetilde{q}})$ satisfy
\begeq
\label{eq1.11}
a \in L^{\infty}(\Lambda_{\Phi_0}), \quad \widetilde{a} \in L^{\infty}(\Lambda_{\Phi_0}).
\endeq
As established in~\cite{CoHiSj},~\cite{CoHiSj23}, each of the assumptions in (\ref{eq1.11}) is equivalent to the boundedness of the corresponding operator,
\begeq
\label{eq1.12}
{\rm Top}(e^q): H_{\Phi_0}(\comp^n) \rightarrow H_{\Phi_0}(\comp^n),\quad {\rm Top}(e^{\widetilde{q}}): H_{\Phi_0}(\comp^n) \rightarrow H_{\Phi_0}(\comp^n).
\endeq
The following is the first main result of this work, with ${\rm Op}^w$ denoting the Weyl quantization in the complex domain, see~\cite{Sj95},~\cite[Chapter 13]{Zw12},~\cite{HiSj15}.

\begin{theo}
\label{theo_main_1}
Let $\Phi_0$ be a strictly plurisubharmonic quadratic form on $\comp^n$, and let $q$, $\widetilde{q}$ be complex valued quadratic forms on $\comp^n$ such that {\rm (\ref{eq1.3})}, {\rm (\ref{eq1.9})} hold. Let $a$, $\widetilde{a}\in C^{\infty}(\Lambda_{\Phi_0})$ be the Weyl symbols of the Toeplitz operators ${\rm Top}(e^q)$, ${\rm Top}(e^{\widetilde{q}})$, respectively, which are of the form {\rm (\ref{eq1.7})}, {\rm (\ref{eq1.10})}, and let us assume that $\pm 1 \notin {\rm Spec}({\cal F})$, $\pm 1 \notin {\rm Spec}(\widetilde{{\cal F}})$. Here ${\cal F}$, $\widetilde{{\cal F}}$ are the fundamental matrices of the quadratic forms $F$, $\widetilde{F}$, respectively. Assume that {\rm (\ref{eq1.11})} holds and that $-1\notin {\rm Spec}(\widetilde{{\cal F}}{\cal F})$. We have
\begeq
\label{eq1.13}
{\rm Top}(e^{\widetilde{q}})\circ {\rm Top}(e^q) = C\, {\rm Op}^w(e^{i\widehat{F}}): H_{\Phi_0}(\comp^n) \rightarrow H_{\Phi_0}(\comp^n),
\endeq
for some $0\neq C\in \comp$. Here $\widehat{F}$ is a holomorphic quadratic form on $\comp^{2n}$ such that
\begeq
\label{eq1.13.1}
{\rm Im}\, \widehat{F}|_{\Lambda_{\Phi_0}}\geq 0.
\endeq
The fundamental matrix $\widehat{{\cal F}}$ of $\widehat{F}$ given by
\begeq
\label{eq1.14}
\widehat{{\cal F}} = (1 + {\cal F})(1 + \widetilde{{\cal F}}{\cal F})^{-1} (1 + \widetilde{{\cal F}}) -1.
\endeq
\end{theo}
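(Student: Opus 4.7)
My plan is to pass through the Weyl quantization. By (\ref{eq1.4.1}), (\ref{eq1.7}), and (\ref{eq1.10}) we have
${\rm Top}(e^q) = C\,{\rm Op}^w(e^{iF})$ and ${\rm Top}(e^{\widetilde{q}}) = \widetilde{C}\,{\rm Op}^w(e^{i\widetilde{F}})$, where $F$, $\widetilde{F}$ are holomorphic quadratic forms on $\comp^{2n}$ and both operators are bounded on $H_{\Phi_0}(\comp^n)$ by (\ref{eq1.11})--(\ref{eq1.12}). The composition is then the Weyl quantization of the Moyal product $e^{i\widetilde{F}}\sharp e^{iF}$, which in the complex setting of \cite{CoHiSj},~\cite{CoHiSjWh},~\cite{CoHiSj23} is realized as a Gaussian oscillatory integral over $\Lambda_{\Phi_0}\times\Lambda_{\Phi_0}$, along a suitably deformed contour.

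The core step is to evaluate this Gaussian integral by quadratic stationary phase. Since $F$, $\widetilde{F}$ and the symplectic pairing are all holomorphic quadratic, the full phase is quadratic in $(Y,Z)\in\Lambda_{\Phi_0}^2$. The Hessian in $(Y,Z)$ is a block matrix whose invertibility reduces, after standard manipulation, to the invertibility of $1+\widetilde{{\cal F}}{\cal F}$, which is precisely the hypothesis $-1\notin{\rm Spec}(\widetilde{{\cal F}}{\cal F})$; this yields a unique critical point $(Y_c,Z_c)$ depending linearly on the external variable $X=(x,\xi)$. Stationary phase then gives $(e^{i\widetilde{F}}\sharp e^{iF})(X)=C_0\,e^{i\widehat{F}(X)}$ for a holomorphic quadratic form $\widehat{F}$, establishing (\ref{eq1.13}) once the prefactors are absorbed into $C$.

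For the matrix formula (\ref{eq1.14}), I would first read off $\widehat{F}(X)$ as the critical value of the Moyal phase and then identify its fundamental matrix through the Cayley-type correspondence between the fundamental matrix of a Gaussian Weyl symbol and the linear complex canonical transformation carried by its FIO quantization. Composition of the two canonical transformations corresponds to multiplying their Cayley images, and inverting the correspondence produces (\ref{eq1.14}); the factored form of the right-hand side reflects the block structure of the stationary phase computation, and the assumptions $\pm 1\notin{\rm Spec}({\cal F})$, $\pm 1\notin{\rm Spec}(\widetilde{{\cal F}})$ are exactly what is needed for these Cayley transforms to be defined. As a sanity check, (\ref{eq1.14}) reduces to $\widehat{{\cal F}}={\cal F}$ when $\widetilde{{\cal F}}=0$ and to $\widehat{{\cal F}}=\widetilde{{\cal F}}$ when ${\cal F}=0$, consistent with the identity Weyl symbol having zero fundamental matrix. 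The positivity (\ref{eq1.13.1}) then follows indirectly: ${\rm Top}(e^{\widetilde{q}})\circ{\rm Top}(e^q)$ is bounded on $H_{\Phi_0}$ as a product of bounded operators, hence so is ${\rm Op}^w(e^{i\widehat{F}})$; by the characterization of bounded Gaussian Weyl quantizations from \cite{CoHiSj},~\cite{CoHiSj23}, this forces $e^{i\widehat{F}}$ to be bounded on $\Lambda_{\Phi_0}$, which is equivalent to ${\rm Im}\,\widehat{F}|_{\Lambda_{\Phi_0}}\geq 0$.

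The principal obstacle is the rigorous handling of the Moyal integral in the complex FIO framework, since $e^{iF}$ and $e^{i\widetilde{F}}$ are only bounded (not integrable) on $\Lambda_{\Phi_0}$. One must choose a contour deformation within $\comp^{4n}$ that makes the quadratic Moyal phase simultaneously non-degenerate and of the right positivity class for stationary phase to apply, exploiting the complex FIO machinery of the cited works. Once this analytic step is set up, the subsequent matrix algebra producing (\ref{eq1.14}) should be direct.
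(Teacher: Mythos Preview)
Your overall strategy is correct and ends up at the same algebraic destination as the paper, but the mechanism you use to get there is genuinely different, so a brief comparison is warranted.

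The paper does \emph{not} compute the Moyal product $e^{i\widetilde F}\sharp e^{iF}$ as a Gaussian oscillatory integral over $\Lambda_{\Phi_0}\times\Lambda_{\Phi_0}$. Instead it works at the level of Fourier integral operators in the complex domain: it shows (Section~\ref{sec_complex_FIO}) that the composed phase $\psi(x,z,w)+\varphi(z,y,\theta)$ is again a non-degenerate H\"ormander phase generating the composed canonical transformation $\widehat\kappa=\widetilde\kappa\circ\kappa$, and that the naturally composed contour is a good contour. Thus ${\rm Op}^w(e^{i\widetilde F})\circ{\rm Op}^w(e^{iF})$ is an FIO quantizing $\widehat\kappa$, and the assumption $-1\notin{\rm Spec}(\widetilde{\cal F}{\cal F})$ enters (via~(\ref{eq3.21})) as the condition $-1\notin{\rm Spec}(\widehat\kappa)$ that allows one to write this FIO as a Weyl quantization ${\rm Op}^w(e^{i\widehat F})$ through the Cayley correspondence~(\ref{eq3.6}). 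Your route reaches the same Cayley algebra for~(\ref{eq1.14}), but the ``principal obstacle'' you identify (making the symbol-level Moyal integral rigorous by contour deformation) is precisely what the paper avoids by staying on the FIO side, where the contour is given to you as the composed contour~(\ref{eq2.15}).

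Your argument for the positivity~(\ref{eq1.13.1}) is also different, and more roundabout. The paper gets ${\rm Im}\,\widehat F|_{\Lambda_{\Phi_0}}\ge 0$ \emph{directly}: $\kappa$ and $\widetilde\kappa$ are positive relative to $\Lambda_{\Phi_0}$, hence so is $\widehat\kappa=\widetilde\kappa\circ\kappa$ (Theorem~\ref{theo1}), and positivity of $\widehat\kappa$ is equivalent to~(\ref{eq1.13.1}) via \cite[Proposition~B.1]{CoHiSj}. Your indirect route (boundedness of the product $\Rightarrow$ boundedness of ${\rm Op}^w(e^{i\widehat F})$ $\Rightarrow$ $e^{i\widehat F}\in L^\infty(\Lambda_{\Phi_0})$) is legitimate, but to invoke the boundedness characterization from \cite{CoHiSj},~\cite{CoHiSj23} you first need $\pm1\notin{\rm Spec}(\widehat{\cal F})$, which you have not yet said; this does follow from the Cayley relation (cf.~(\ref{eq3.11})), but you should make that explicit.
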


\bigskip
\noindent
Before stating the second main result, we shall recall the notion of polarization of a quadratic form on $\comp^n$. Given a complex valued quadratic form $f(x)$ on $\comp^n$, the polarization $f^{\pi}(x,y)$ of $f$ is the unique holomorphic quadratic form on $\comp^{2n}_{x,y}$ such that $f^{\pi}(x,\overline{x}) = f(x)$, $x\in \comp^n$. We shall use this system of notation below, with the following two exceptions: the polarization of the positive definite Hermitian form $\Phi_{{\rm herm}}$ in (\ref{eq1.3}) will be denoted by $\Psi_{{\rm herm}}$ and the polarization of the strictly plurisubharmonic quadratic form $\Phi_0$ will be denoted by $\Psi_0$.

\medskip
\noindent
Let $G$ be a holomorphic quadratic form on $\comp^{2n}$ and let us observe that the polarization of the quadratic form
\begeq
\label{eq1.15.01}
f(x) = G\left(x,\frac{2}{i}\frac{\partial \Phi_0}{\partial x}(x)\right),\quad x\in \comp^n,
\endeq
is given by
\begeq
\label{eq1.15.02}
f^{\pi}(x,y) = G\left(x,\frac{2}{i}\frac{\partial \Psi_0}{\partial x}(x,y)\right),\quad x,y \in \comp^n.
\endeq

\medskip
\noindent
\begin{theo}
\label{theo_main_2}
Let $\Phi_0$ be a strictly plurisubharmonic quadratic form on $\comp^n$, and let $G$ be a holomorphic quadratic form on $\comp^{2n}$ such that
\begeq
\label{eq1.15.02.1}
{\rm Im}\,G|_{\Lambda_{\Phi_0}}\geq 0.
\endeq
Assume that the fundamental matrix of $G$ does not have the eigenvalues $\pm 1$. Assume also that the holomorphic quadratic form
\begeq
\label{eq1.15.03}
iG\left(x,\frac{2}{i}\frac{\partial \Psi_0}{\partial x}(x,z)\right) + 4\Psi_{{\rm herm}}(x,z),\quad (x,z)\in \comp^{2n}
\endeq
is non-degenerate, and let us set
\begeq
\label{eq1.15.04}
Q^{\pi}(y,\theta) = {\rm vc}_{x,z}\left(4\Psi_{{\rm herm}}(x-y,z-\theta) + iG\left(x,\frac{2}{i}\frac{\partial \Psi_0}{\partial x}(x,z)\right)\right), \quad (y,\theta) \in \comp^{2n},
\endeq
where ${\rm vc}_{x,z}$ indicates that we take the critical value with respect to $x, z$. Here the critical value is attained at a unique critical point which is non-degenerate. Assume furthermore that the restriction $Q(y) = Q^{\pi}(y,\overline{y})$ of the holomorphic quadratic form $Q^{\pi}$ on $\comp^{2n}$ to the anti-diagonal satisfies
\begeq
\label{eq1.15.1}
{\rm Re}\, Q(y) < \Phi_{{\rm herm}}(y),\quad 0\neq y \in \comp^n.
\endeq
Then the Weyl quantization ${\rm Op}^w(e^{iG})$ is a Toeplitz operator, bounded on $H_{\Phi_0}(\comp^n)$, with the Toeplitz symbol of the form $C e^{Q}$, for some constant $C \neq 0$.
\end{theo}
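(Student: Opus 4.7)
The strategy is to invert the Berezin-type transformation (\ref{eq1.4.2}) between Toeplitz and Weyl symbols at the level of exponentials of holomorphic quadratic forms. Given the Weyl symbol $e^{iG}|_{\Lambda_{\Phi_0}}$, I aim to construct a holomorphic quadratic form $Q$ on $\comp^n$ and a nonzero constant so that the heat-type operator in (\ref{eq1.4.2}) sends $e^Q$ to a nonzero constant multiple of $e^{iG}|_{\Lambda_{\Phi_0}}$. Once this is established, the Weyl symbols of ${\rm Op}^w(e^{iG})$ and of ${\rm Top}(e^Q)$ (up to the constant) coincide on $\Lambda_{\Phi_0}$, and hence, by holomorphic extension from the maximally totally real $\Lambda_{\Phi_0}$, on all of $\comp^{2n}$, so that the two operators are equal on $H_{\Phi_0}(\comp^n)$.

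The key is to interpret (\ref{eq1.15.04}) as precisely the critical-value (Legendre-transform) realization of the formal inverse of $\exp((1/4)(\Phi''_{0,x\overline{x}})^{-1}\partial_x\cdot\partial_{\overline{x}})$ acting on exponentials of quadratic forms. Polarizing via $\Psi_0$, the extension of $iG|_{\Lambda_{\Phi_0}}$ off $\Lambda_{\Phi_0}$ to $\comp^{2n}_{x,z}$ is given by $iG(x,(2/i)\partial_x\Psi_0(x,z))$ in view of (\ref{eq1.15.02}), while the Gaussian phase $4\Psi_{\rm herm}(x-y,z-\theta)$ is the polarization of the positive definite Hermitian form $4\Phi_{\rm herm}(x-y)$ that governs the inverse heat kernel. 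The non-degeneracy assumption on (\ref{eq1.15.03}) is exactly the non-degeneracy of the Hessian of the combined phase in $(x,z)$, so that the critical point is unique and $Q^{\pi}$, hence $Q$, is well-defined as a holomorphic quadratic form. The spectral hypothesis $\pm 1 \notin \Spec$ of the fundamental matrix of $G$ plays the same role here as in (\ref{eq1.7})--(\ref{eq1.8}), ensuring that the inversion can actually be carried out on the exponential of $iG$.

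A direct quadratic stationary-phase calculation, applied to the integral representation of (\ref{eq1.4.2}) on $e^Q$, then verifies that the output is of the form $C_0 e^{iG}|_{\Lambda_{\Phi_0}}$ with an explicit nonzero $C_0$: this amounts to solving the linear critical-point equations in $(x,z)$ extracted from (\ref{eq1.15.04}), eliminating those variables, and matching, via (\ref{eq1.15.02}), the resulting quadratic form against $iG|_{\Lambda_{\Phi_0}}$. Finally, the growth hypothesis (\ref{eq1.15.1}) places $Q$ within the class of admissible Toeplitz symbols described by (\ref{eq1.3}), so that ${\rm Top}(Ce^Q)$ is well-defined, and the combination of ${\rm Im}\, G|_{\Lambda_{\Phi_0}}\geq 0$ (which renders $e^{iG}$ bounded on $\Lambda_{\Phi_0}$) with the Berger--Coburn type equivalence between Toeplitz and Weyl boundedness established in~\cite{CoHiSj},~\cite{CoHiSj23} yields the boundedness of ${\rm Top}(Ce^Q)$ on $H_{\Phi_0}(\comp^n)$.

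The main obstacle will be this explicit quadratic stationary-phase calculation, namely confirming that the critical-value formula (\ref{eq1.15.04}) really is the inverse of the heat-type operator on Gaussian symbols and tracking the nonzero proportionality constant. The computation is linear at heart, coming down to differentiating the quadratic phase and inverting its Hessian, but one must combine the two non-degeneracy hypotheses to justify each step, and carefully exploit the polarization identity (\ref{eq1.15.02}) linking $\Psi_0$ to the parametrization of $\Lambda_{\Phi_0}$ in (\ref{eq1.5}).
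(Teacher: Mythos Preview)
Your plan is correct and follows essentially the same route as the paper: both arguments express the Toeplitz-to-Weyl map (\ref{eq1.4.2}) on Gaussians as a Gaussian integral, rewrite it after polarization as a critical value ${\rm vc}_{y,\theta}\left(-4\Psi_{{\rm herm}}(x-y,z-\theta) + Q^{\pi}(y,\theta)\right)$, and then invert this critical-value relation to recover $Q^{\pi}$ from the polarized $iG$. The only organizational difference is that the paper isolates the inversion step as a clean abstract lemma (Proposition~\ref{inversion}): if $f(X,Y)$ has ${\rm det}\,f''_{XY}\neq 0$ and $h(X)={\rm vc}_Y(f(X,Y)+g(Y))$, then $g(Y)={\rm vc}_X(-f(X,Y)+h(X))$, proved symplectically via the canonical transformation generated by $f$; whereas you propose to verify the inversion directly by stationary phase on the forward integral. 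Either works, but packaging the step as the lemma makes the role of the non-degeneracy hypothesis on (\ref{eq1.15.03}) transparent (it is exactly the condition that $f(0,Y)+g(Y)$ be non-degenerate) and spares you from tracking the Hessian inversion by hand.
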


\medskip
\noindent
{\it Remark}. Applying Theorem \ref{theo_main_2} in the case when $G = \widehat{F}$ in Theorem \ref{theo_main_1}, we obtain a general criterion for when the composition of two metaplectic Toeplitz operators in (\ref{eq1.13}) is a Toeplitz operator.

\medskip
\noindent
{\it Remark}. As we shall recall in Section \ref{sec_Examples} below, if the condition (\ref{eq1.15.1}) fails, then the bounded operator ${\rm Op}^w(e^{iG})$ in Theorem \ref{theo_main_2} does not need to be a Toeplitz operator.

\bigskip
\noindent
The composition problem for Toeplitz operators on the Bargmann space $H_{\Phi_0}(\comp^n)$ has been studied in~\cite{Ba},~\cite{LC01}, see also~\cite{V1},~\cite{HuVi}, as well as~\cite{Sj95},~\cite[Chapter 13]{Zw12} for the semiclassical case. In fact, the situation is particularly satisfying in the latter case, where the composition calculus often works with ${\cal O}(h^{\infty})$ -- errors. To recall a rough statement of it, following~\cite{Sj95},~\cite[Theorem 13.11]{Zw12},~\cite{LC19},~\cite{GevreyI}, let $p_1,p_2 \in C^{\infty}(\comp^n)$ be such that $\partial_x^{\alpha} \partial_{\overline{x}}^{\beta}p_j \in L^{\infty}(\comp^n)$, $j = 1,2$, for all $\alpha$, $\beta \in \nat^n$, and let us consider the semiclassical Toeplitz quantizations
\begeq
\label{eq1.16}
{\rm Top}_h(p_j) = \Pi_{\Phi_0,h} \circ p_j \circ \Pi_{\Phi_0,h} = {\cal O}(1): H_{\Phi_0,h}(\comp^n) \rightarrow H_{\Phi_0,h}(\comp^n).
\endeq
Here, similarly to (\ref{eq1.1}), we set $H_{\Phi_0,h}(\comp^n) = L^2(\comp^n, e^{-2\Phi_0/h}L(dx))\cap {\rm Hol}(\comp^n)$, and
$$
\Pi_{\Phi_0,h}: L^2(\comp^n, e^{-2\Phi_0/h}L(dx)) \rightarrow H_{\Phi_0,h}(\comp^n)
$$
is the orthogonal projection. We then have
\begeq
\label{eq1.17}
{\rm Top}_h(p_1) {\rm Top}_h(p_2) - {\rm Top}_h(p) = {\cal O}(h^{\infty}): H_{\Phi_0,h}(\comp^n) \rightarrow H_{\Phi_0,h}(\comp^n).
\endeq
Here $p\in C^{\infty}(\comp^n)$ admits a complete asymptotic expansion in integer powers of $h$, as $h\rightarrow 0^+$, that we shall only recall in the case when $\displaystyle \Phi_0(x) = \frac{\abs{x}^2}{4}$, see~\cite[Theorem 13.11]{Zw12} for the case of a general quadratic weight,
\begeq
\label{eq1.18}
p(x) \sim \sum_{\abs{\alpha}\geq 0} \frac{(-2h)^{\abs{\alpha}}}{\alpha!}\partial_x^{\alpha} p_1(x) \partial_{\overline{x}}^{\alpha} p_2(x),\quad x \in \comp^n.
\endeq
Accurate remainder bounds in the semiclassical expansion (\ref{eq1.18}) have been established in~\cite{ChPo}. In the non-semiclassical case, i.e. for $h=1$, the composition formula (\ref{eq1.18}) is still valid and becomes exact when the Toeplitz symbols $p_1$, $p_2$ are polynomials in $x$, $\overline{x}$, see~\cite{LC01}. Furthermore, it also holds, with absolute convergence, for $p_1$, $p_2$ which are Fourier-Stieltjes transforms of compactly supported measures on $\comp^n$,~\cite{LC01}. The Toeplitz symbols considered in this work may be unbounded, exhibiting some super-exponential growth at infinity, and when understanding the composition of the corresponding operators we shall rely crucially on the complex FIO point of view, developed in~\cite{CoHiSj},~\cite{CoHiSjWh},~\cite{CoHiSj23}.

\medskip
\noindent
{\it Remark}. As we also observed in~\cite{CoHiSj23}, while very special, the Toeplitz symbols considered here, given by exponentials of complex quadratic forms, may still be of some interest since the class of the associated Toeplitz operators includes those that are "at the edge" of boundedness, with the unboundedness of the symbols attenuated by their fast oscillations at infinity. As such, it has also been exploited as a source of various counter-examples, see~\cite{BCH},~\cite{LC01}.

\medskip
\noindent
The plan of the paper is as follows. In Section \ref{sec_complex_FIO}, we review some essentially well known results concerning the composition of metaplectic Fourier integral operators in the complex domain associated to complex linear canonical transformations, that are positive relative to the maximally totally real subspace $\Lambda_{\Phi_0}$ in (\ref{eq1.5}). This discussion is specialized in Section \ref{sec_Weyl} to FIOs given as Weyl quantizations of symbols of the form $e^{iF(x,\xi)}$, where $F$ is a holomorphic quadratic form on $\comp^{2n}$, and we show that under mild additional assumptions, the composition of two such operators is again an operator of this form. The proofs of Theorem \ref{theo_main_1} and \ref{theo_main_2} are then completed in Section \ref{sec_Toeplitz}, by passing from the Toeplitz symbols to the Weyl ones, along the lines of~\cite{CoHiSj},~\cite{CoHiSjWh}, and then back, by means of a well known critical value inversion formula, somewhat in the spirit of the inversion formula for the Legendre transformation. In Section \ref{sec_Examples} we discuss composition properties of some explicit families of metaplectic Toeplitz operators on a model Bargmann space, closely related to the example of~\cite{LC01}, and in particular we illustrate Theorems \ref{theo_main_1} and \ref{theo_main_2} in this case. Appendix A, finally, is devoted to some remarks concerning adjoints of complex metaplectic FIOs quantizing positive complex linear canonical transformations, realized as linear continuous maps between spaces of entire holomorphic functions with quadratic exponential weights. We compute the canonical transformation associated to the complex adjoint of such an operator.

\section{Composition of metaplectic FIOs in the complex domain}
\label{sec_complex_FIO}
\setcounter{equation}{0}
The discussion in this section is essentially well known, see~\cite[Chapter 4]{Sj82},~\cite{CGHS},~\cite{CoHiSj}, and serves as a convenient starting point for us. Let
\begeq
\label{eq2.1}
\kappa: \comp^{2n} \rightarrow \comp^{2n}
\endeq
be a complex linear canonical transformation, and let $\varphi(x,y,\theta)$ be a holomorphic quadratic form on $\comp^n_x \times \comp^n_y \times \comp^N_{\theta}$, which is a non-degenerate phase function in the sense of H\"ormander~\cite{H_FIOI},
\begeq
\label{eq2.2}
{\rm rank}\, \left(\varphi''_{\theta x}\,\,\, \varphi''_{\theta y}\,\,\, \varphi''_{\theta \theta}\right) = N,
\endeq
generating the graph of $\kappa$, so that
\begeq
\label{eq2.3}
\kappa: \comp^{2n}\ni (y,-\varphi'_y(x,y,\theta)) \mapsto (x,\varphi'_x(x,y,\theta))\in \comp^{2n},\quad \varphi'_{\theta}(x,y,\theta) = 0.
\endeq
For future reference, let us recall from~\cite{CGHS} that the fact that the canonical relation
\begeq
\label{eq2.3.0.1}
\left\{(x,\varphi'_x(x,y,\theta); y, -\varphi'_y(x,y,\theta)),\, \varphi'_{\theta}(x,y,\theta) = 0\right\} \subset \comp^{2n}\times \comp^{2n}
\endeq
is the graph of a linear canonical transformation is equivalent to the following condition,
\begeq
\label{eq2.3.0.2}
{\rm det}\, \begin{pmatrix}
\varphi''_{xy} & \varphi''_{x\theta} \\
\varphi''_{\theta y} & \varphi''_{\theta \theta} \\
\end{pmatrix} \neq 0.
\endeq

\bigskip
\noindent
Assume that
\begeq
\label{eq2.3.1}
\kappa(\Lambda_{\Phi_0}) = \Lambda_{\Phi_1},
\endeq
where $\Phi_0$, $\Phi_1$ are strictly plurisubharmonic quadratic forms on $\comp^n$. Here we have set as in (\ref{eq1.5}),
\begeq
\label{eq2.4}
\Lambda_{\Phi_j} = \left\{\left(x,\frac{2}{i}\frac{\partial \Phi_j}{\partial x}(x)\right), \, x\in \comp^n\right\} \subset \comp^{2n} = \comp^n_x \times \comp^n_{\xi},
\endeq
for $j= 0,1$. It follows from~\cite{Sj82},~\cite[Appendix B]{CGHS} that the plurisubharmonic quadratic form
\begeq
\label{eq2.5}
\comp^n \times \comp^N \ni (y,\theta) \mapsto -{\rm Im}\, \varphi(0,y,\theta) + \Phi_0(y)
\endeq
is non-degenerate of signature $(n+N,n+N)$. Let $(y_c(x),\theta_c(x))\in \comp^n \times \comp^N$ be the unique critical point of
\begeq
\label{eq2.6}
\comp^n \times \comp^N \ni (y,\theta) \mapsto -{\rm Im}\, \varphi(x,y,\theta) + \Phi_0(y),
\endeq
for each $x\in \comp^n$, and let us recall from~\cite[Appendix B]{CGHS} that
\begeq
\label{eq2.7}
\Phi_1(x) = {\rm vc}_{y,\theta} \left(-{\rm Im}\, \varphi(x,y,\theta) + \Phi_0(y)\right).
\endeq

\medskip
\noindent
It follows that there exists an affine subspace $\Gamma(x) \subset \comp^{n+N}_{y,\theta}$ of real dimension $n+N$, passing through the critical point $(y_c(x),\theta_c(x))$ such that
$$
-{\rm Im}\, \varphi(x,y,\theta) + \Phi_0(y) \leq \Phi(x) - \frac{1}{C} {\rm dist}\left((y,\theta), \left(y_c(x),\theta_c(x)\right)\right)^2,
$$
along $\Gamma(x)$. In such a situation, here and below, we say that $\Gamma(x) \subset \comp^{n+N}_{y,\theta}$ is a good contour for the plurisubharmonic function
$$
\comp^n \times \comp^N \ni (y,\theta) \mapsto -{\rm Im}\,\varphi(x,y,\theta) + \Phi_0(y).
$$
We conclude, following~\cite{Sj82},~\cite[Appendix B]{CGHS} that the corresponding realization of a Fourier integral operator $A$ quantizing $\kappa$,
\begeq
\label{eq2.8}
A_{\Gamma} u(x) = \int\!\!\!\int_{\Gamma(x)} e^{i\varphi(x,y,\theta)} a u(y)\, dy\, d\theta, \quad a \in \comp,
\endeq
defines a bounded linear map,
\begeq
\label{eq2.9}
A_{\Gamma} = A: H_{\Phi_0}(\comp^n) \rightarrow H_{\Phi_1}(\comp^n).
\endeq
Here the Bargmann space $H_{\Phi_0}(\comp^n)$ is defined in (\ref{eq1.1}), with the space $H_{\Phi_1}(\comp^n)$ having an analogous definition.

\medskip
\noindent
Let next $\widetilde{\kappa}: \comp^{2n} \rightarrow \comp^{2n}$ be a second complex linear canonical transformation, and let $\psi(x,y,w)$ be a
holomorphic quadratic form on $\comp^n_x \times \comp^n_y \times \comp^M_{w}$, which is a non-degenerate phase function in the sense of H\"ormander, such that
\begeq
\label{eq2.10}
\widetilde{\kappa}: \comp^{2n}\ni (y,-\psi'_y(x,y,w)) \mapsto (x,\psi'_x(x,y,w))\in \comp^{2n},\quad \psi'_{w}(x,y,w) = 0.
\endeq
Similarly to (\ref{eq2.3.1}), assume that
\begeq
\label{eq2.11}
\widetilde{\kappa}(\Lambda_{\Phi_1}) = \Lambda_{\Phi_2},
\endeq
where $\Phi_2$ is a strictly plurisubharmonic quadratic form on $\comp^n$. Letting $\widetilde{\Gamma}(x) \subset \comp^{n+M}_{y,w}$ be a good contour for the plurisubharmonic function
$$
\comp^n \times \comp^M \ni (y,w) \mapsto -{\rm Im}\,\psi(x,y,w) + \Phi_1(y),
$$
we consider the corresponding realization of a Fourier integral operator $B$ quantizing $\widetilde{\kappa}$,
\begeq
\label{eq2.12}
B_{\widetilde{\Gamma}} u(x) = \int\!\!\!\int_{\widetilde{\Gamma}(x)} e^{i\psi(x,y,w)} b\, u(y)\, dy\, dw, \quad b \in \comp,
\endeq
defining a bounded linear map,
\begeq
\label{eq2.13}
B_{\widetilde{\Gamma}} = B: H_{\Phi_1}(\comp^n) \rightarrow H_{\Phi_2}(\comp^n).
\endeq

\bigskip
\noindent
The composition $B_{\widetilde{\Gamma}}\circ A_{\Gamma}$ takes the form
\begeq
\label{eq2.14}
\left(B_{\widetilde{\Gamma}}\circ A_{\Gamma} u\right)(x) = \int\!\!\!\int\!\!\!\int\!\!\!\int_{\widehat{\Gamma}(x)} e^{i\left(\psi(x,z,w) + \varphi(z,y,\theta)\right)} ab\, u(y)\, dy\,d\theta\,dz\,dw,
\endeq
where $\widehat{\Gamma}(x) \subset \comp^n_z \times \comp^M_w \times \comp^n_y \times \comp^N_{\theta}$ is the composed contour of real dimension $2n+N+M$ given by
\begeq
\label{eq2.15}
\widehat{\Gamma}(x) = \left\{(z,w,y,\theta); (z,w)\in \widetilde{\Gamma}(x),\,\, (y,\theta)\in \Gamma(z)\right\}.
\endeq
Let us set
\begeq
\label{eq2.16}
\Phi(x,y;z,w,\theta) = \psi(x,z,w) + \varphi(z,y,\theta),
\endeq
with $\chi = (z,w,\theta)\in \comp^n_z \times \comp^M_w \times \comp^N_{\theta}$ viewed as the fiber variables. We claim that the holomorphic quadratic form $\Phi(x,y;\chi)$ is a non-degenerate phase function in the sense of H\"ormander, and when verifying the claim we proceed similarly to~\cite[Chapter 6]{Sogge}. We need to show that the $(n+M+N)\times (n+n+n+M+N)$ matrix
\begeq
\label{eq2.19.3}
\left(\Phi''_{\chi x}\,\,\, \Phi''_{\chi y}\,\,\, \Phi''_{\chi \chi}\right) = \begin{pmatrix}
\Phi''_{zx} & \Phi''_{zy} & \Phi''_{zz} & \Phi''_{zw} & \Phi''_{z\theta} \\
\Phi''_{wx} & \Phi''_{wy} & \Phi''_{wz} & \Phi''_{ww} & \Phi''_{w\theta} \\
\Phi''_{\theta x} & \Phi''_{\theta y} & \Phi''_{\theta z} & \Phi''_{\theta w} & \Phi''_{\theta \theta} \\
\end{pmatrix}
\endeq
has full rank, and using (\ref{eq2.16}) we see that the matrix in (\ref{eq2.19.3}) is of the form
\begeq
\label{eq2.19.4}
\begin{pmatrix}
\psi''_{zx} & \varphi''_{zy} & \psi''_{zz} + \varphi''_{zz} & \psi''_{zw} & \varphi''_{z\theta} \\
\psi''_{wx} & 0_{M\times n} & \psi''_{wz} & \psi''_{ww} & 0_{M\times N} \\
0_{N\times n} & \varphi''_{\theta y} & \varphi''_{\theta z} & 0_{N\times M} & \varphi''_{\theta \theta} \\
\end{pmatrix}.
\endeq
Here in view (\ref{eq2.3.0.2}), the $(n+N)\times (n+N)$ matrix
\begeq
\label{eq2.19.5}
\begin{pmatrix}
\varphi''_{zy} & \varphi''_{z\theta} \\
\varphi''_{\theta y} & \varphi''_{\theta \theta} \\
\end{pmatrix}
\endeq
is non-degenerate, and we have
\begeq
\label{eq2.19.6}
{\rm rank}\, \left(\psi''_{w x}\,\,\, \psi''_{w z}\,\,\, \psi''_{w w}\right) = M.
\endeq
Let $B$ be an invertible $M\times M$ matrix, whose columns are among the columns of the matrix $\left(\psi''_{w x}\,\,\, \psi''_{w z}\,\,\, \psi''_{w w}\right)$. Observing that an $(n+M+N)\times (n+M+N)$ matrix of the form
\begeq
\label{eq2.19.8}
\begin{pmatrix}
A & \varphi''_{zy} & \varphi''_{z\theta} \\
B & 0_{M\times n} & 0_{M\times N} \\
C & \varphi''_{\theta y} & \varphi''_{\theta \theta} \\
\end{pmatrix}
\endeq
is non-degenerate, independently of matrices $A$, $C$ of size $n\times M$ and $N\times M$, respectively, we conclude that the matrix in (\ref{eq2.19.3}) is of full rank, giving the claim.

\medskip
\noindent
It follows that the associated canonical relation
\begeq
\label{eq2.17}
\comp^{2n} \ni (y,-\Phi'_y(x,y;\chi)) \mapsto (x,\Phi'_x(x,y;\chi))\in \comp^{2n},\quad \Phi'_z = 0,\,\,\Phi'_w = 0,\,\,\Phi'_{\theta} = 0,
\endeq
is of dimension $2n$ and is given by
\begeq
\label{eq2.18}
\comp^{2n}\ni (y,-\varphi'_y(z,y,\theta))\mapsto (x,\psi'_x(x,z,w))\in \comp^{2n},
\endeq
\begeq
\label{eq2.19}
\psi'_z(x,z,w) + \varphi'_z(z,y,\theta)=0,\,\,\psi'_w(x,z,w) = 0,\,\, \varphi'_{\theta}(z,y,\theta) = 0.
\endeq
It is therefore clear that the canonical relation (\ref{eq2.17}) is the graph of the canonical transformation $\widetilde{\kappa}\circ \kappa$.

\bigskip
\noindent
We see furthermore, directly from the definitions, that the plurisubharmonic quadratic form
\begeq
\label{eq2.20}
\comp^n_z \times \comp^M_w \times \comp^n_y \times \comp^N_{\theta}\ni (z,w,y,\theta)\mapsto -{\rm Im}\,\psi(0,z,w) - {\rm Im}\, \varphi(z,y,\theta) + \Phi_0(y)
\endeq
is negative definite along the contour $\widehat{\Gamma}(0)$ of real dimension $2n+N+M$, and therefore, the quadratic form (\ref{eq2.20}) is non-degenerate of signature $(2n+N+M,2n+N+M)$. It follows that the composed contour $\widehat{\Gamma}(x)$ is good for the function
\begeq
\label{eq2.21}
\comp^n_z \times \comp^M_w \times \comp^n_y \times \comp^N_{\theta}\ni (z,w,y,\theta)\mapsto -{\rm Im}\,\psi(x,z,w) - {\rm Im}\, \varphi(z,y,\theta) + \Phi_0(y),
\endeq
and we conclude therefore that the composition $B_{\widetilde{\Gamma}}\circ A_{\Gamma}$ is a realization of a Fourier integral operator quantizing the canonical transformation $\widetilde{\kappa}\circ \kappa$.

\bigskip
\noindent
Assume next that the complex linear canonical transformations $\kappa$, $\widetilde{\kappa}$ are positive relative to $\Lambda_{\Phi_0}$, so that
\begeq
\label{eq2.22}
\frac{1}{i} \biggl(\sigma(\kappa(\rho), \iota_{\Phi_0} \kappa(\rho)) - \sigma(\rho, \iota_{\Phi_0}(\rho))\biggr) \geq 0,\quad \rho \in \comp^{2n},
\endeq
and similarly for $\widetilde{\kappa}$, see~\cite{CoHiSj}. Here $\iota_{\Phi_0}: \comp^{2n} \rightarrow \comp^{2n}$ is the unique anti-linear involution such that $\iota|_{\Lambda_{\Phi_0}} = 1$, and
\begeq
\label{eq2.23}
\sigma = \sum_{j=1}^n d\xi_j \wedge dx_j
\endeq
is the complex symplectic form on $\comp^{2n} = \comp^n_x \times \comp^n_{\xi}$. It follows from~\cite[Theorem 1.1]{CoHiSj} that (\ref{eq2.3.1}) holds, with the strictly plurisubharmonic quadratic form $\Phi_1$ satisfying $\Phi_1 \leq \Phi_0$. Let us next check that (\ref{eq2.11}) holds as well, with $\Phi_2$ quadratic strictly plurisubharmonic on $\comp^n$ satisfying $\Phi_2 \leq \Phi_0$. When doing so, we observe that since $\widetilde{\kappa}$ is positive relative to $\Lambda_{\Phi_0}$, we have that the plurisubharmonic quadratic form
\begeq
\label{eq2.24}
\comp^n \times \comp^M \ni (y,w) \mapsto -{\rm Im}\, \psi(0,y,w) + \Phi_0(y)
\endeq
is non-degenerate of signature $(n+M,n+M)$. Using that
\begeq
\label{eq2.25}
-{\rm Im}\, \psi(0,y,w) + \Phi_1(y) \leq -{\rm Im}\, \psi(0,y,w) + \Phi_0(y),
\endeq
we conclude that since the left hand side in (\ref{eq2.25}) is a plurisubharmonic quadratic form, it is also non-degenerate of signature $(n+M,n+M)$. It follows that (\ref{eq2.11}) holds with
\begeq
\label{eq2.26}
\Phi_2(x) = {\rm vc}_{y,w}\left(-{\rm Im}\, \psi(x,y,w) + \Phi_1(y)\right).
\endeq
An application of the fundamental lemma of~\cite{Sj82} allows us to conclude that $\Phi_2$ is plurisubharmonic, and since the real linear subspace $\Lambda_{\Phi_2} = \widetilde{\kappa}(\Lambda_{\Phi_1})$ is R-symplectic, the plurisubharmonicity of $\Phi_2$ is necessarily strict. We also have
\begeq
\label{eq2.27}
\Phi_2(x) = {\rm vc}_{y,w}\left(-{\rm Im}\, \psi(x,y,w) + \Phi_1(y)\right) \leq {\rm vc}_{y,w}\left(-{\rm Im}\, \psi(x,y,w) + \Phi_0(y)\right),
\endeq
and the strictly plurisubharmonic quadratic form in the right hand side is $\leq \Phi_0$, in view of the positivity of $\widetilde{\kappa}$ relative to $\Lambda_{\Phi_0}$.

\bigskip
\noindent
We may summarize the discussion in this section in the following essentially well known result, see also~\cite[Proposition B.4]{CGHS}.

\begin{theo}
\label{theo1}
Let $\Phi_0$ be a strictly plurisubharmonic quadratic form on $\comp^n$, and let $\kappa$, $\widetilde{\kappa}: \comp^{2n} \rightarrow \comp^{2n}$ be complex linear canonical transformations that are positive relative to $\Lambda_{\Phi_0}$. Let $A$, $B$ be metaplectic Fourier integral operators quantizing $\kappa$, $\widetilde{\kappa}$, respectively, realized with the help of good contours. Then the composition $B \circ A$ is a Fourier integral operator associated to the canonical transformation $\widetilde{\kappa}\circ \kappa$, which is also positive relative to $\Lambda_{\Phi_0}$. The operator $B\circ A$ can be realized with the help of a good contour and we have that
$$
B\circ A: H_{\Phi_0}(\comp^n) \rightarrow H_{\Phi_0}(\comp^n)
$$
is bounded.
\end{theo}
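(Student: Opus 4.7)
The plan is to consolidate the piecewise verifications carried out in the body of Section~\ref{sec_complex_FIO}. First, I would invoke the positivity of $\kappa$ and $\widetilde{\kappa}$ relative to $\Lambda_{\Phi_0}$ together with~\cite[Theorem 1.1]{CoHiSj} to produce strictly plurisubharmonic quadratic forms $\Phi_1,\Phi_2$ on $\comp^n$, with $\Phi_1\leq \Phi_0$ and $\Phi_2\leq \Phi_0$, and with $\kappa(\Lambda_{\Phi_0})=\Lambda_{\Phi_1}$, $\widetilde{\kappa}(\Lambda_{\Phi_1})=\Lambda_{\Phi_2}$; the argument at (\ref{eq2.24})--(\ref{eq2.27}) is the relevant step, with strict plurisubharmonicity of $\Phi_2$ coming from the R-symplecticity of $\Lambda_{\Phi_2}=\widetilde{\kappa}(\Lambda_{\Phi_1})$. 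This sets up (\ref{eq2.8})--(\ref{eq2.9}) and (\ref{eq2.12})--(\ref{eq2.13}) to realize $A:H_{\Phi_0}(\comp^n)\to H_{\Phi_1}(\comp^n)$ and $B:H_{\Phi_1}(\comp^n)\to H_{\Phi_2}(\comp^n)$ via good contours.

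Second, I would assemble the composed-contour discussion (\ref{eq2.14})--(\ref{eq2.21}): the composed phase $\Phi(x,y;z,w,\theta)=\psi(x,z,w)+\varphi(z,y,\theta)$ is non-degenerate in the sense of H\"ormander by the block matrix argument at (\ref{eq2.19.3})--(\ref{eq2.19.8}), the associated canonical relation is precisely the graph of $\widetilde{\kappa}\circ\kappa$ by (\ref{eq2.17})--(\ref{eq2.19}), and the composed contour $\widehat{\Gamma}(x)$ in (\ref{eq2.15}) is good for the plurisubharmonic quadratic form in~(\ref{eq2.21}), since that function is negative definite along $\widehat{\Gamma}(0)$ and the critical point depends linearly on $x$. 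Consequently $B\circ A$ is a bona fide good-contour realization of a metaplectic FIO quantizing $\widetilde{\kappa}\circ\kappa$.

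Third, I would verify that $\widetilde{\kappa}\circ\kappa$ is itself positive relative to $\Lambda_{\Phi_0}$ by a one-line telescoping: for $\rho\in\comp^{2n}$,
$$
\frac{1}{i}\bigl(\sigma(\widetilde{\kappa}\kappa\rho,\iota_{\Phi_0}\widetilde{\kappa}\kappa\rho)-\sigma(\rho,\iota_{\Phi_0}\rho)\bigr)
$$
splits as the sum of the analogous expression for $\widetilde{\kappa}$ evaluated at $\kappa\rho$ and that for $\kappa$ evaluated at $\rho$, each of which is non-negative by the positivity assumption (\ref{eq2.22}). Finally, to promote the mapping property $B\circ A:H_{\Phi_0}(\comp^n)\to H_{\Phi_2}(\comp^n)$ furnished by the good-contour realization to boundedness on $H_{\Phi_0}(\comp^n)$, I would use only the inequality $\Phi_2\leq\Phi_0$, which gives $e^{-2\Phi_0}\leq e^{-2\Phi_2}$ and hence the continuous embedding $H_{\Phi_2}(\comp^n)\hookrightarrow H_{\Phi_0}(\comp^n)$. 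No serious obstacle remains, since the conceptual work of showing that composing two good-contour realizations yields another such realization with the expected canonical relation has already been discharged in the body of the section; the only subtlety worth flagging is that the natural target of the composition is $H_{\Phi_2}(\comp^n)$, and it is the chain $\Phi_2\leq\Phi_0$ (a consequence of joint positivity) that buys boundedness into $H_{\Phi_0}(\comp^n)$.
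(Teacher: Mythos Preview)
Your proposal is correct and follows essentially the same approach as the paper, which simply summarizes the discussion of Section~\ref{sec_complex_FIO} in the theorem statement. The one small addition you make explicit---the telescoping verification that $\widetilde{\kappa}\circ\kappa$ is positive relative to $\Lambda_{\Phi_0}$, and the embedding $H_{\Phi_2}(\comp^n)\hookrightarrow H_{\Phi_0}(\comp^n)$ from $\Phi_2\leq\Phi_0$---is left implicit in the paper but is exactly the right way to close the argument.
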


\section{Composing complex Weyl quantizations}
\label{sec_Weyl}
\setcounter{equation}{0}
Let $\Phi_0$ be a strictly plurisubharmonic quadratic form on $\comp^n_x$ and let $F$ be a holomorphic quadratic form on $\comp^{2n}_{x,\xi}$ such that
\begeq
\label{eq3.1}
{\rm Im}\, F \geq 0 \quad \wrtext{along}\,\,\Lambda_{\Phi_0}.
\endeq
Here the real $2n$-dimensional linear subspace $\Lambda_{\Phi_0}\subset \comp^{2n}$ has been introduced in (\ref{eq1.5}). Assume that the fundamental matrix of $F$,
\begeq
\label{eq3.2}
{\cal F} = \frac{1}{2} \begin{pmatrix}F''_{\xi x} &F''_{\xi \xi }\\ -F''_{x x} &-F''_{x \xi}\end{pmatrix}
\endeq
satisfies
\begeq
\label{eq3.2.1}
\pm 1 \notin {\rm Spec}({\cal F}).
\endeq
Then, as explained in~\cite{CoHiSj},~\cite{CoHiSjWh}, the Weyl quantization ${\rm Op}^w(e^{iF})$ can be regarded as a Fourier integral operator in the complex domain associated to the complex linear canonical transformation
\begeq
\label{eq3.3}
\kappa = (1 - {\cal F}) \left(1 + {\cal F}\right)^{-1}.
\endeq
Here we may notice that the map
\begeq
\label{eq3.3.1}
\kappa +1 = (1 - {\cal F} + 1 + {\cal F})\left(1 + {\cal F}\right)^{-1} = 2 \left(1 + {\cal F}\right)^{-1}
\endeq
is bijective, and we have the inverse relation
\begeq
\label{eq3.3.2}
{\cal F} = (1+\kappa)^{-1}(1-\kappa).
\endeq
Furthermore, as we have seen in~\cite[Proposition B.1]{CoHiSj}, the assumption (\ref{eq3.1}) implies that $\kappa$ is positive relative to $\Lambda_{\Phi_0}$, i.e., that (\ref{eq2.22}) holds, and that the operator
\begeq
\label{eq3.3.3}
{\rm Op}^w(e^{iF}): H_{\Phi_0}(\comp^n) \rightarrow H_{\Phi_0}(\comp^n)
\endeq
is bounded.

\medskip
\noindent
Let $\widetilde{F}$ be a second holomorphic quadratic form on $\comp^{2n}_{x,\xi}$ such that
\begeq
\label{eq3.4}
{\rm Im}\, \widetilde{F} \geq 0 \quad \wrtext{along}\,\,\Lambda_{\Phi_0}.
\endeq
Assume also that $\pm 1 \notin {\rm Spec}(\widetilde{{\cal F}})$, where $\widetilde{{\cal F}}$ is the fundamental matrix of $\widetilde{F}$. It follows therefore from Theorem \ref{theo1} that the composition ${\rm Op}^w(e^{i\widetilde{F}})\circ {\rm Op}^w(e^{iF})$ is a Fourier integral operator associated to the complex linear canonical transformation
\begeq
\label{eq3.5}
\widehat{\kappa} := \widetilde{\kappa}\circ \kappa: \comp^{2n} \rightarrow \comp^{2n},
\endeq
which is positive relative to $\Lambda_{\Phi_0}$. Here $\widetilde{\kappa} = (1 - \widetilde{{\cal F}}) (1 + \widetilde{{\cal F}})^{-1}$. Assume that $-1\notin {\rm Spec}(\widehat{\kappa})$ and let us set, similarly to (\ref{eq3.3.2}),
\begeq
\label{eq3.6}
\widehat{{\cal F}} = (1+\widehat{\kappa})^{-1}(1-\widehat{\kappa}).
\endeq
Using the fact that $\widehat{\kappa}$ is canonical, we see that the complex linear map $\widehat{{\cal F}}$ is skew-symmetric with respect to $\sigma$,
\begeq
\label{eq3.6.1}
\widehat{{\cal F}} + \widehat{{\cal F}}^{\sigma} = 0,
\endeq
where $\widehat{{\cal F}}^{\sigma}$ is the symplectic transpose of $\widehat{{\cal F}}$, given by
$$
\sigma(\widehat{{\cal F}}\mu,\nu) = \sigma(\mu, \widehat{{\cal F}}^{\sigma}\nu),\quad \mu,\nu \in \comp^{2n}.
$$
Writing
\begeq
\label{eq3.7}
\sigma(\mu,\nu) = J\mu \cdot \nu, \quad \mu,\nu \in \comp^{2n},
\endeq
where
\begeq
\label{eq3.8}
J=\begin{pmatrix}0 &1\\ -1 &0\end{pmatrix},\quad J^t = -J,\quad J^2 = -1,
\endeq
we see that (\ref{eq3.6.1}) is equivalent to the statement that $J\widehat{{\cal F}}$ is symmetric. It follows that the holomorphic quadratic form
\begeq
\label{eq3.9}
\widehat{F}(\rho) = \sigma(\rho,\widehat{{\cal F}}\rho) = - J \widehat{{\cal F}}\rho\cdot \rho, \quad \rho \in \comp^{2n},
\endeq
satisfies
\begeq
\label{eq3.10}
\sigma(t,H_{\widehat{F}}(\rho)) = Jt\cdot H_{\widehat{F}}(\rho) = d \widehat{F}(\rho)\cdot t = -2J \widehat{{\cal F}}\rho\cdot t =
2\widehat{{\cal F}}\rho\cdot Jt,\quad t\in \comp^{2n}.
\endeq
We get $H_{\widehat{F}}(\rho) = 2 \widehat{{\cal F}}\rho$, where $H_{\widehat{F}}$ is the Hamilton vector field of $\widehat{F}$, and therefore, in view of (\ref{eq3.2}), we conclude that $\widehat{{\cal F}}$ is the fundamental matrix of the quadratic form $\widehat{F}$. Using (\ref{eq3.6}), we observe also that the linear map
\begeq
\label{eq3.11}
\widehat{{\cal F}} + 1 = (1+\widehat{\kappa})^{-1}(1-\widehat{\kappa} + 1 + \widehat{\kappa}) = 2 (1+\widehat{\kappa})^{-1}
\endeq
is bijective, and therefore $1 - \widehat{{\cal F}}$ is bijective as well. It follows furthermore  from (\ref{eq3.6}) that the canonical transformation $\widehat{\kappa}$ takes the form
\begeq
\label{eq3.12}
\left(1 + \widehat{{\cal F}}\right)\rho \mapsto \left(1 - \widehat{{\cal F}}\right)\rho,
\endeq
and recalling the positivity of $\widehat{\kappa}$ relative to $\Lambda_{\Phi_0}$, we conclude, following~\cite[Proposition B.1]{CoHiSj}, that the holomorphic quadratic form in (\ref{eq3.9}) satisfies
\begeq
\label{eq3.13}
{\rm Im}\, \widehat{F} \geq 0 \quad \wrtext{along}\,\,\Lambda_{\Phi_0}.
\endeq
It follows that the holomorphic quadratic form
$$
\comp^n_x \times \comp^n_y \times \comp^n_{\theta} \ni (x,y,\theta) \mapsto (x-y)\cdot \theta + \widehat{F}\left(\frac{x+y}{2},\theta\right)
$$
is a non-degenerate phase function in the sense of H\"ormander, which generates the positive complex linear canonical transformation
$\widehat{\kappa} = \widehat{\kappa}\circ \kappa$ in (\ref{eq3.5}).

\medskip
\noindent
The discussion above can be summarized in the following result.
\begin{prop}
\label{Weyl}
Let $\Phi_0$ be a strictly plurisubharmonic quadratic form on {\rm $\comp^n$}, and let $F$, $\widetilde{F}$ be holomorphic quadratic forms on {\rm $\comp^{2n}$} such that
$$
{\rm Im}\, F\left(x,\frac{2}{i}\frac{\partial \Phi_0}{\partial x}(x)\right)\geq 0, \quad {\rm Im}\, \widetilde{F}\left(x,\frac{2}{i}\frac{\partial \Phi_0}{\partial x}(x)\right)\geq 0,\quad x\in \comp^n.
$$
Assume that the fundamental matrices ${\cal F}$, $\widetilde{\cal F}$ of the quadratic forms $F$, $\widetilde{F}$, respectively, satisfy
$\pm 1 \notin {\rm Spec}({\cal F})$, $\pm 1 \notin {\rm Spec}(\widetilde{{\cal F}})$. Let
\begeq
\label{eq3.14}
\kappa = (1 - {\cal F}) \left(1 + {\cal F}\right)^{-1},\quad \widetilde{\kappa} = (1 - \widetilde{{\cal F}}) (1 + \widetilde{{\cal F}})^{-1},
\endeq
and assume that $-1\notin {\rm Spec}(\widetilde{\kappa}\circ \kappa)$. We have then
\begeq
\label{eq3.15}
{\rm Op}^w(e^{i\widetilde{F}})\circ {\rm Op}^w(e^{iF}) = C\, {\rm Op}^w(e^{i\widehat{F}}),
\endeq
for some constant $0 \neq C \in \comp$, where $\widehat{F}$ is a holomorphic quadratic form  on $\comp^{2n}$ satisfying
$$
{\rm Im}\, \widehat{F}\left(x,\frac{2}{i}\frac{\partial \Phi_0}{\partial x}(x)\right)\geq 0,\quad x\in \comp^n,
$$
and such that $\pm 1 \notin {\rm Spec}(\widehat{{\cal F}})$, where $\widehat{\cal F}$ is the fundamental matrix of $\widehat{F}$. The operators in {\rm (\ref{eq3.15})} are bounded: $H_{\Phi_0}(\comp^n) \rightarrow H_{\Phi_0}(\comp^n)$.
\end{prop}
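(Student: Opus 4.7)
The proof proposal is to assemble the pieces laid out in equations (\ref{eq3.1})--(\ref{eq3.13}) into a complete argument by combining them with Theorem \ref{theo1}. First, I would invoke Theorem \ref{theo1} directly: since $\pm 1 \notin {\rm Spec}({\cal F})$ and $\pm 1 \notin {\rm Spec}(\widetilde{{\cal F}})$, both ${\rm Op}^w(e^{iF})$ and ${\rm Op}^w(e^{i\widetilde{F}})$ are metaplectic FIOs in the complex domain associated to the linear canonical transformations $\kappa$ and $\widetilde{\kappa}$ in (\ref{eq3.14}), and by (\ref{eq3.1}), (\ref{eq3.4}) together with Proposition B.1 of \cite{CoHiSj} both are positive relative to $\Lambda_{\Phi_0}$ and define bounded operators on $H_{\Phi_0}(\comp^n)$. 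Theorem \ref{theo1} then gives that the composition ${\rm Op}^w(e^{i\widetilde{F}})\circ {\rm Op}^w(e^{iF})$ is a metaplectic FIO quantizing $\widehat{\kappa}=\widetilde{\kappa}\circ \kappa$, positive relative to $\Lambda_{\Phi_0}$, and bounded $H_{\Phi_0}(\comp^n) \to H_{\Phi_0}(\comp^n)$.

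Next, under the standing assumption $-1 \notin {\rm Spec}(\widehat{\kappa})$, I would define $\widehat{{\cal F}}$ by (\ref{eq3.6}) and use the calculation in (\ref{eq3.6.1})--(\ref{eq3.10}) to show that $\widehat{{\cal F}}$ is the Hamilton matrix of a well-defined holomorphic quadratic form $\widehat{F}$ on $\comp^{2n}$, namely the one given by (\ref{eq3.9}). The skew-symmetry of $\widehat{\cal F}$ with respect to $\sigma$, coming from $\widehat{\kappa}$ being canonical, is exactly what makes $J\widehat{\cal F}$ symmetric so that (\ref{eq3.9}) makes sense. The identity (\ref{eq3.11}) forces $-1 \notin {\rm Spec}(\widehat{{\cal F}})$, and the bijectivity of $1-\widehat{{\cal F}}$ (equivalently, invertibility of $1+\widehat\kappa$ applied on the other side, cf.\ (\ref{eq3.3.1})--(\ref{eq3.3.2})) gives $+1\notin {\rm Spec}(\widehat{{\cal F}})$.

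With $\widehat{F}$ in hand, I would note that by (\ref{eq3.12}) the canonical transformation $\widehat{\kappa}$ is exactly the one associated to the Weyl symbol $e^{i\widehat{F}}$, so that ${\rm Op}^w(e^{i\widehat{F}})$ is itself a metaplectic FIO quantizing $\widehat{\kappa}$; invoking Proposition B.1 of \cite{CoHiSj} once more in the converse direction, the positivity of $\widehat{\kappa}$ relative to $\Lambda_{\Phi_0}$ yields ${\rm Im}\,\widehat{F}|_{\Lambda_{\Phi_0}}\geq 0$.

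Finally, the identification (\ref{eq3.15}) follows from the fact that the space of metaplectic FIOs in the complex domain quantizing a fixed positive complex linear canonical transformation is one-dimensional. Concretely, both ${\rm Op}^w(e^{i\widetilde{F}})\circ {\rm Op}^w(e^{iF})$ and ${\rm Op}^w(e^{i\widehat{F}})$ are realized by good-contour integrals against Gaussian kernels whose phases generate the same canonical relation (the graph of $\widehat{\kappa}$), as verified through the non-degenerate phase function $(x-y)\cdot \theta + \widehat{F}((x+y)/2,\theta)$; by the uniqueness of the metaplectic representative up to a multiplicative constant, the two operators must agree up to a nonzero scalar $C$. The main subtle point, and the step I would want to double-check carefully, is precisely this uniqueness: ruling out that two distinct holomorphic quadratic phases could produce FIOs quantizing $\widehat{\kappa}$ that differ by something other than a scalar. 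This is standard in the real metaplectic setting and extends to the complex positive setting exactly because the Gaussian kernel is determined up to a scalar by its canonical transformation, but one should cite (or briefly reprove) this fact to close the argument.
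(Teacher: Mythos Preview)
Your proposal is correct and follows essentially the same route as the paper: the paper's proof is precisely the discussion in equations (\ref{eq3.1})--(\ref{eq3.13}) which you have reassembled, invoking Theorem~\ref{theo1} for the composition, constructing $\widehat{{\cal F}}$ via the Cayley transform (\ref{eq3.6}), and reading off $\pm 1\notin{\rm Spec}(\widehat{{\cal F}})$ and ${\rm Im}\,\widehat{F}|_{\Lambda_{\Phi_0}}\geq 0$ from (\ref{eq3.11})--(\ref{eq3.13}). Your flagging of the uniqueness-up-to-scalar step is apt; the paper treats it as implicit (two metaplectic FIOs with constant amplitudes and phases generating the same graph differ by a nonzero constant, which follows from the exact stationary-phase reduction of the fiber variables), so your instinct to cite or note this explicitly is a reasonable sharpening rather than a departure.
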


\bigskip
\noindent
{\it Remark}. The constant $C\neq 0$ in (\ref{eq3.15}) can be computed, see~\cite{H_Mehler}.

\medskip
\noindent
{\it Remark}. Assume that one of the canonical transformations $\kappa$, $\widetilde{\kappa}$ in (\ref{eq3.14}) is strictly positive relative to $\Lambda_{\Phi_0}$, in the sense that the inequality in (\ref{eq2.22}) is strict, for all $\rho \neq 0$. It follows that $\widehat{\kappa} = \widetilde{\kappa}\circ \kappa$ is also strictly positive relative to $\Lambda_{\Phi_0}$,
\begeq
\label{eq3.16}
\frac{1}{i} \biggl(\sigma(\widehat{\kappa}(\rho), \iota_{\Phi_0} \widehat{\kappa}(\rho)) - \sigma(\rho, \iota_{\Phi_0}(\rho))\biggr) > 0,\quad 0\neq \rho \in \comp^{2n},
\endeq
and in particular $-1\notin {\rm Spec}(\widehat{\kappa})$. More generally, we may observe that the spectrum of a strictly positive complex linear canonical transformation avoids the set $\{\lambda\in \comp; \abs{\lambda} = 1\}$. We also recall from~\cite{CoHiSj} that the strict positivity of $\kappa$ in (\ref{eq3.14}) relative to $\Lambda_{\Phi_0}$ is equivalent to the ellipticity property,
\begeq
\label{eq3.17}
{\rm Im}\, F\left(x,\frac{2}{i}\frac{\partial \Phi_0}{\partial x}(x)\right) \asymp \abs{x}^2,\quad x\in \comp^n.
\endeq

\bigskip
\noindent
We shall finish this section by deriving an explicit expression for the fundamental matrix $\widehat{{\cal F}}$ of the quadratic form $\widehat{F}$ in (\ref{eq3.15}) in terms of the fundamental matrices ${\cal F}$, $\widetilde{{\cal F}}$. See also~\cite{Viola} for closely related computations.

\medskip
\noindent
Let us use (\ref{eq3.3.2}) and the corresponding expression for $\widetilde{{\cal F}}$ in terms of $\widetilde{\kappa}$ to write
\begeq
\label{eq3.18}
{\cal F} = (1+\kappa)^{-1}(1-\kappa), \quad \widetilde{{\cal F}} = (1+\widetilde{\kappa})^{-1}(1-\widetilde{\kappa}).
\endeq
It follows that
\begin{multline}
\label{eq3.19}
1 + \widetilde{{\cal F}}{\cal F} = 1 + (1+\widetilde{\kappa})^{-1}(1-\widetilde{\kappa})(1+\kappa)^{-1}(1-\kappa) \\
= (1+\widetilde{\kappa})^{-1}\biggl((1+\widetilde{\kappa})(1+\kappa) + (1-\widetilde{\kappa})(1-\kappa)\biggr)(1+\kappa)^{-1} \\
= 2 (1+\widetilde{\kappa})^{-1}\left(1 + \widetilde{\kappa}\kappa\right)(1+\kappa)^{-1} =
\frac{1}{2} (1 + \widetilde{{\cal F}})\left(1 + \widetilde{\kappa}\kappa\right)(1 + {\cal F}).
\end{multline}
Here in the last equality we have used (cf. (\ref{eq3.3.1})) that
\begeq
\label{eq3.20}
{\cal F} + 1 = 2 (1+\kappa)^{-1},\quad \widetilde{{\cal F}} + 1 = 2 (1+\widetilde{\kappa})^{-1}.
\endeq
Using (\ref{eq3.19}), we get that
\begeq
\label{eq3.21}
2 (1 + \widetilde{{\cal F}})^{-1} (1 + \widetilde{{\cal F}}{\cal F})(1 + {\cal F})^{-1} = 1 + \widetilde{\kappa}\kappa.
 \endeq
In Proposition \ref{Weyl} we have assumed that the linear map $1 + \widetilde{\kappa}\kappa = 1 + \widehat{\kappa}$ is bijective, and using (\ref{eq3.21}) we conclude that this assumption is equivalent to the bijectivity of $1 + \widetilde{{\cal F}}{\cal F}$. We get
\begeq
\label{eq3.22}
2 (1 + \widehat{\kappa})^{-1} = (1 + {\cal F})(1 + \widetilde{{\cal F}}{\cal F})^{-1} (1 + \widetilde{{\cal F}}),
\endeq
and recalling (\ref{eq3.11}) we conclude that
\begeq
\label{eq3.23}
\widehat{{\cal F}} = (1 + {\cal F})(1 + \widetilde{{\cal F}}{\cal F})^{-1} (1 + \widetilde{{\cal F}}) -1.
\endeq
Combining Proposition \ref{Weyl} with (\ref{eq3.23}), we shall be able to complete the proof of Theorem \ref{theo_main_1} in Section \ref{sec_Toeplitz} below, once we have recalled how to express a Toeplitz operator of the form (\ref{eq1.4}) as a Weyl quantization.

\section{From Toeplitz quantization to Weyl quantization and back}
\label{sec_Toeplitz}
\setcounter{equation}{0}
Let $\Phi_0$ be a strictly plurisubharmonic quadratic form on $\comp^n$ and let $q$ be a complex valued quadratic form on $\comp^n$ such that (\ref{eq1.3}) holds. From~\cite{CoHiSj},~\cite{CoHiSjWh} we recall that when equipped with the maximal domain
\begeq
\label{eq4.2}
{\cal D}({\rm Top}(e^q)) = \left\{u\in H_{\Phi_0}(\comp^n); e^{q} u \in L^2(\comp^n, e^{-2\Phi_0}L(dx))\right\},
\endeq
the Toeplitz operator
\begeq
\label{eq4.3}
{\rm Top}(e^{q}) = \Pi_{\Phi_0} \circ e^{q} \circ \Pi_{\Phi_0}: H_{\Phi_0}(\comp^n) \rightarrow H_{\Phi_0}(\comp^n)
\endeq
becomes densely defined. Here the orthogonal projection $\Pi_{\Phi_0}: L^2(\comp^n,e^{-2\Phi_0} L(dx)) \rightarrow H_{\Phi_0}(\comp^n)$ has been introduced in (\ref{eq1.2}).

\medskip
\noindent
{\it Remark}. Let $\Psi_0$ be the polarization of $\Phi_0$. Using the well known property
\begeq
\label{eq4.4.1}
2{\rm Re}\, \Psi_0(x,\overline{y}) - \Phi_0(x) - \Phi_0(y) = - \Phi_{\rm herm}(x-y) \asymp - \abs{x-y}^2,
\endeq
see~\cite{Sj95}, together with (\ref{eq1.3}), we obtain that
\begeq
\label{eq4.4.2}
e^{2\Psi_0(\cdot, \overline{y})} \in {\cal D}({\rm Top}(e^q)),\quad y\in \comp^n.
\endeq
We may also observe that the linear span of $\{e^{2\Psi_0(\cdot, \overline{y})}; y\in \comp^n\}$ is dense in $H_{\Phi_0}(\comp^n)$.

\bigskip
\noindent
Let us write, following~\cite{Sj95},~\cite{CoHiSj},
\begeq
\label{eq4.5}
{\rm Top}(e^q) = a^w(x,D_x),
\endeq
where $a\in C^{\infty}(\Lambda_{\Phi_0})$ is the Weyl symbol of the Toeplitz operator ${\rm Top}(e^{q})$, given by
\begeq
\label{eq4.6}
a\left(x,\xi\right)  = \left(\exp\left(\frac{1}{4} \left(\Phi''_{0,x\overline{x}}\right)^{-1} \partial_x \cdot \partial_{\overline{x}}\right)e^q\right)(x), \quad (x,\xi) \in \Lambda_{\Phi_0}.
\endeq
An application of the method of exact stationary phase allows us to conclude that
\begeq
\label{eq4.8}
a(x,\xi) = C\, \exp(i(F(x,\xi))), \quad (x,\xi) \in \Lambda_{\Phi_0},
\endeq
for some constant $C\neq 0$, where $F$ is a holomorphic quadratic form on $\comp^{2n}$. See also (\ref{eq4.11.5}) and the computations below. We may write therefore
\begeq
\label{eq4.9}
{\rm Top}(e^q) = C\,{\rm Op}^w(e^{iF}).
\endeq
In what follows, we shall assume that the fundamental matrix ${\cal F}$ of $F$ satisfies $\pm 1 \notin {\rm Spec}({\cal F})$.

\medskip
\noindent
Let $\widetilde{q}$ be a second complex valued quadratic form on $\comp^n$ satisfying (\ref{eq1.9}), and let us write similarly to (\ref{eq4.9}),
\begeq
\label{eq4.10}
{\rm Top}(e^{\widetilde{q}}) = \widetilde{C}\, {\rm Op}^w(e^{i\widetilde{F}}),\quad \widetilde{C} \neq 0.
\endeq
Here $\widetilde{F}$ is a holomorphic quadratic form on $\comp^{2n}$. Assume also that the fundamental matrix $\widetilde{{\cal F}}$ of $\widetilde{F}$ is such that $\pm 1 \notin {\rm Spec}(\widetilde{{\cal F}})$.

\bigskip
\noindent
Assume that the Weyl symbols satisfy
\begeq
\label{eq4.10.1}
e^{iF}\in L^{\infty}(\Lambda_{\Phi_0}), \quad e^{i\widetilde{F}}\in L^{\infty}(\Lambda_{\Phi_0}),
\endeq
and that $1 + \widetilde{{\cal F}}{\cal F}: \comp^{2n} \rightarrow \comp^{2n}$ is bijective. The discussion in Section \ref{sec_Weyl} applies therefore to the composition
${\rm Top}(e^{\widetilde{q}})\circ {\rm Top}(e^q)$, in view of (\ref{eq4.9}), (\ref{eq4.10}), as it stands, implying Theorem \ref{theo_main_1}.

\bigskip
\noindent
We shall now proceed to give a proof of Theorem \ref{theo_main_2}, and to this end, it will first be convenient to take a closer look at the formula for the Weyl symbol (\ref{eq4.6}), and to give a more explicit description of the Fourier multiplier in (\ref{eq4.6}). When doing so, we observe that the symbol of the second order constant coefficient differential operator on $\comp^n_x$,
\begeq
\label{eq4.11.1}
-\frac{1}{4} \left(\Phi''_{0,x\overline{x}}\right)^{-1} \partial_x \cdot \partial_{\overline{x}}
\endeq
is given by the positive definite quadratic form
\begeq
\label{eq4.11.2}
\frac{1}{2} \left(8\Phi''_{0,x\overline{x}}\right)^{-1} \overline{\xi}\cdot \xi,\quad \xi \in \comp^n.
\endeq
Here it will be convenient to recall from~\cite{DiSj} that the dual of a real valued non-degenerate quadratic form $\real^N \ni x \mapsto \displaystyle \frac{1}{2} Ax\cdot x$ is by definition the quadratic form $\real^N \ni \xi \mapsto \displaystyle \frac{1}{2} A^{-1}\xi\cdot \xi$. Assuming that the quadratic form is positive definite, we can express its dual as the Legendre transform,
\begeq
\label{eq4.11.2.1}
\frac{1}{2} A^{-1}\xi\cdot \xi = {\rm sup}_{x}\left(x\cdot \xi - \frac{1}{2} Ax\cdot x\right).
\endeq
It follows therefore that the dual of the quadratic form in (\ref{eq4.11.2}) is given by the positive definite quadratic form
\begeq
\label{eq4.11.3}
{\rm sup}_{\xi} \left({\rm Re}\, (\xi\cdot \overline{x}) -  \frac{1}{2} \left(8\Phi''_{0,x\overline{x}}\right)^{-1} \overline{\xi}\cdot \xi\right)
= 4\Phi''_{0,x\overline{x}}\overline{x}\cdot x = 4\Phi_{{\rm herm}}(x).
\endeq
Combining this observation with the standard formula,
\begeq
\label{eq4.11.4}
e^{-\frac{AD\cdot D}{2}}u(x) = \frac{1}{(2\pi)^{N/2}} \frac{1}{({\rm det} A)^{1/2}} \int e^{-\frac{A^{-1}y\cdot y}{2}}u(x-y)\,dy,
\endeq
where $A$ is an $N\times N$ real symmetric positive definite matrix and $u\in {\cal S}(\real^N)$, we conclude, in view of (\ref{eq4.6}), that we have
\begeq
\label{eq4.11.5}
a(x,\xi) = C_{\Phi_0} \int_{{\bf C}^n} \exp(-4 \Phi_{{\rm herm}}(x-y)) e^{q(y)}\, L(dy),\quad (x,\xi) \in \Lambda_{\Phi_0}.
\endeq
Here $C_{\Phi_0}\neq 0$ and the integral converges thanks to (\ref{eq1.3}).

\bigskip
\noindent
We shall next evaluate a general Gaussian integral of the form (\ref{eq4.11.5}). To this end, let $Q$ be a complex valued quadratic form on $\comp^n$ such that
\begeq
\label{eq4.11.6}
{\rm Re}\, Q(x) < \Phi_{{\rm herm}}(x),\quad 0\neq x \in \comp^n.
\endeq
Introducing the polarizations $Q^{\pi}$ of $Q$ and $\Psi_{{\rm herm}}$ of $\Phi_{{\rm herm}}$, we may write in view of (\ref{eq4.11.5}), for some $C\neq 0$,
\begin{multline}
\label{eq4.12}
\left(\exp\left(\frac{1}{4} \left(\Phi''_{0,x\overline{x}}\right)^{-1} \partial_x \cdot \partial_{\overline{x}}\right)e^Q\right)(x) \\
= C \int\!\!\!\int_{\Gamma} \exp\left(-4\Psi_{{\rm herm}}(x-y,\overline{x}-\theta) + Q^{\pi}(y,\theta)\right)\, dy\,d\theta.
\end{multline}
Here $\Gamma \subset \comp^{2n}_{y,\theta}$ is the contour given by $\theta = \overline{y}$ (the anti-diagonal). An application of~\cite[Proposition 2.1]{CoHiSj23} together with (\ref{eq4.11.6}) allows us to conclude that the holomorphic quadratic form
\begeq
\label{eq4.17.3}
\comp^{2n}_{y,\theta} \ni (y,\theta) \mapsto -4\Psi_{{\rm herm}}(y,\theta) + Q^{\pi}(y,\theta)
\endeq
is non-degenerate, and therefore the holomorphic function
\begeq
\label{eq4.17.4}
\comp^{2n}_{y,\theta} \ni (y,\theta) \mapsto -4\Psi_{{\rm herm}}(x-y,z-\theta) + Q^{\pi}(y,\theta)
\endeq
has a unique critical point which is non-degenerate, for each $(x,z)\in \comp^n\times \comp^n$. In view of the method of exact (quadratic) stationary phase and (\ref{eq4.12}), it is clear therefore that
\begin{multline}
\label{eq4.17.5}
\left(\exp\left(\frac{1}{4} \left(\Phi''_{0,x\overline{x}}\right)^{-1} \partial_x \cdot \partial_{\overline{x}}\right)e^Q\right)(x) \\ =
C\, \exp\left({\rm vc}_{y,\theta}\left(-4\Psi_{{\rm herm}}(x-y,\overline{x}-\theta) + Q^{\pi}(y,\theta)\right)\right),
\end{multline}
for some $C\neq 0$.

\bigskip
\noindent
Let $G$ be a holomorphic quadratic form on $\comp^{2n}$, such that
$$
{\rm Im}\, G\left(x,\frac{2}{i}\frac{\partial \Phi_0}{\partial x}(x)\right) \geq 0, \quad x\in \comp^n,
$$
and such that the fundamental matrix of $G$ does not have the eigenvalues $\pm 1$. When proving Theorem \ref{theo_main_2}, we would like to give a general criterion for when an operator of the form ${\rm Op}^w(e^{iG})$ is a non-vanishing multiple of an operator the form ${\rm Top}(e^{Q})$, where $Q$ is quadratic. In view of (\ref{eq4.17.5}), this holds precisely when the polarization
\begeq
\label{eq4.17.6}
iG\left(x,\frac{2}{i}\frac{\partial \Psi_0}{\partial x}(x,z)\right), \quad (x,z)\in \comp^{2n}_{x,z}
\endeq
of the quadratic form $\displaystyle iG\left(x,\frac{2}{i}\frac{\partial \Phi_0}{\partial x}(x)\right)$ is of the form
\begeq
\label{eq4.17.7}
iG\left(x,\frac{2}{i}\frac{\partial \Psi_0}{\partial x}(x,z)\right) = {\rm vc}_{y,\theta}\left(-4\Psi_{{\rm herm}}(x-y,z-\theta) + Q^{\pi}(y,\theta)\right).
\endeq
We need to invert the critical value expression (\ref{eq4.17.7}), and to this end we shall make use of the following well known result, see~\cite[Chapter 4]{Sj82},~\cite{CGHS}.
\begin{prop}
\label{inversion}
Let $f(X,Y)$ be a holomorphic quadratic form on $\comp^{N}_X\times \comp^N_Y$ such that ${\rm det}\, f''_{XY}(X,Y) \neq 0$ and let $g(Y)$ be a holomorphic quadratic form on $\comp^N$ such that $f(0,Y)+g(Y)$ is non-degenerate. Set
\begeq
\label{eq4.18}
h(X) = {\rm vc}_Y \left(f(X,Y) + g(Y)\right),\quad X\in \comp^N,
\endeq
where the critical value is attained at a unique critical point which is non-degenerate. Then $-f(X,0) +  h(X)$ is non-degenerate, and we have the inversion formula,
\begeq
\label{eq4.19}
g(Y) = {\rm vc}_X \left(-f(X,Y) + h(X)\right).
\endeq
\end{prop}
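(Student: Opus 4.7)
The plan is to introduce the auxiliary holomorphic quadratic form
$$E(X,Y) = f(X,Y) + g(Y) - h(X)$$
on $\comp^{2N}$, whose critical value in $Y$ (for each fixed $X$) is zero by the very definition of $h$, and to argue that, under the hypothesis $\det f''_{XY}\neq 0$, its critical value in $X$ (for each fixed $Y$) is also zero. Since $-E(X,Y) = -f(X,Y) + h(X) - g(Y)$ and $g(Y)$ does not depend on $X$, the vanishing of ${\rm vc}_X\,E(X,Y)$ will be equivalent to the desired inversion formula.

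The first step will be to record what the definition of $h$ gives. The non-degeneracy of $f(0,Y)+g(Y)$ means that $f''_{YY}+g''_{YY}$ is invertible, so the fiberwise critical point
$$Y^*(X) = -(f''_{YY}+g''_{YY})^{-1}f''_{YX}X$$
is well defined, depends linearly on $X$, and satisfies $E(X,Y^*(X))\equiv 0$. Differentiating this identity in $X$ and using that $\partial_Y E$ vanishes at $(X,Y^*(X))$, I obtain $\partial_X E(X,Y^*(X))\equiv 0$ as well. Hence $E$ together with its full gradient vanishes along the $N$-dimensional linear submanifold $\Sigma = \{(X,Y^*(X)):X\in\comp^N\}$ of $\comp^{2N}$.

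The second step will be to exchange the roles of $X$ and $Y$: the hypothesis $\det f''_{XY}\neq 0$ is exactly what makes $Y^*:\comp^N\to\comp^N$ a linear isomorphism. For each $Y\in\comp^N$ there is therefore a unique $X_0(Y)\in\comp^N$ with $(X_0(Y),Y)\in\Sigma$; at this point $\partial_X E = 0$ and $E = 0$, so $X_0(Y)$ is a critical point of the quadratic form $X\mapsto E(X,Y)$ with critical value $0$. A direct computation using the identity $h'_X(X) = f'_X(X,Y^*(X))$ (itself a consequence of the critical point equation in $Y$) yields
$$E''_{XX} = f''_{XX} - h''_{XX} = -f''_{XY}(Y^*)'(X) = f''_{XY}(f''_{YY}+g''_{YY})^{-1}f''_{YX},$$
which is invertible under our two hypotheses. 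This shows that $X_0(Y)$ is the unique critical point of $E(\cdot,Y)$ and that it is non-degenerate; specialising to $Y=0$ gives in passing that $-f(X,0)+h(X)$ is non-degenerate.

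Given the above, the inversion formula is immediate, since ${\rm vc}_X\,E(X,Y) = 0$ for every $Y$ rewrites as ${\rm vc}_X(-f(X,Y)+h(X)) = g(Y)$. The only place where any genuine work occurs is the verification that $Y^*$ is invertible, for which one uses $\det f''_{XY}\neq 0$; the rest is the standard observation that, once the full $1$-jet of $E$ vanishes along $\Sigma$, the transverse slices $\{Y=Y_0\}$ meet $\Sigma$ in a single point, and at that point $E(\cdot,Y_0)$ has a critical point with value $0$. Conceptually, this is nothing more than a quadratic expression of the involutive nature of the Legendre-type duality implemented by critical-value operations, with the mixed Hessian $f''_{XY}$ playing the role of the pairing.
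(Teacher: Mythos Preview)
Your argument is correct. The paper, however, takes a different and more geometric route: it introduces the complex linear canonical transformation $\kappa:(Y,-f'_Y(X,Y))\mapsto(X,f'_X(X,Y))$ associated to the ``phase'' $f$, and the $\comp$-Lagrangian plane $\Lambda_g=\{(Y,g'(Y))\}$. The non-degeneracy of $f(0,Y)+g(Y)$ is read as transversality of $\Lambda_g$ to $\kappa^{-1}(T^*_0\comp^N)$, and one identifies $\kappa(\Lambda_g)=\Lambda_h$ where $h$ is given by (\ref{eq4.18}); applying $\kappa^{-1}$ to $\Lambda_h$ and using the analogous transversality yields (\ref{eq4.19}). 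This approach makes the involutive nature of the construction transparent (it is literally $\kappa^{-1}\circ\kappa=1$ acting on Lagrangian planes) and ties the proposition directly to the generating-function calculus for canonical transformations used elsewhere in the paper. Your approach, by contrast, is entirely elementary: you work directly with the auxiliary quadratic form $E=f+g-h$, exploit the vanishing of its full $1$-jet along the graph $\Sigma$ of the fiberwise critical point, and use the explicit formula $E''_{XX}=f''_{XY}(f''_{YY}+g''_{YY})^{-1}f''_{YX}$ to check non-degeneracy in the transverse direction. The payoff of your method is that it requires no symplectic language and gives an explicit expression for the Hessian; the payoff of the paper's method is that it slots into the FIO framework without computation and makes clear why the hypothesis $\det f''_{XY}\neq 0$ is the ``right'' one (it is exactly the condition that $\kappa$ be a well-defined canonical transformation).
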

\begin{proof}
We recall the main ideas of the proof for completeness. Let us introduce the complex linear canonical transformation
\begeq
\label{eq4.20}
\kappa: \comp^{2N} \ni (Y,-f'_Y(X,Y)) \mapsto (X,f'_X(X,Y))\in \comp^{2N}
\endeq
and the $\comp$-Lagrangian plane
\begeq
\label{eq4.21}
\Lambda_{g} = \left\{\left(Y,g'(Y)\right),\, Y\in \comp^N\right\} \subset \comp^{2N}.
\endeq
The holomorphic quadratic form $f(0,Y) + g(Y)$ is non-degenerate on $\comp^N$ precisely when $\Lambda_g$ and $\kappa^{-1}\left(T^*_0 \comp^N\right)$ are transversal, and it follows that the $\comp$-Lagrangian plane $\kappa(\Lambda_g)$ is of the form
$$
\Lambda_h = \left\{\left(Y,h'(Y)\right),\, Y\in \comp^N\right\} \subset \comp^{2N},
$$
where the holomorphic quadratic form $h$ is given by (\ref{eq4.18}). Next, the $\comp$-Lagrangian planes $\Lambda_h$ and $\kappa(T^*_0\comp^N)$ are transversal, so that $-f(X,0) + h(X)$ is non-degenerate. Writing $\Lambda_g = \kappa^{-1}\left(\Lambda_h\right)$ and using that $\kappa^{-1}$ is of the form
\begeq
\label{eq4.22}
\kappa^{-1}: \comp^{2N} \ni (X,f'_X(X,Y)) \mapsto (Y,-f'_Y(X,Y))\in \comp^{2N},
\endeq
we infer therefore the inversion formula (\ref{eq4.19}).
\end{proof}

\medskip
\noindent
When applying Proposition \ref{inversion} to (\ref{eq4.17.7}), we let $N = 2n$, $Y = (x,z)\in \comp^{2n}$, $X = (y,\theta) \in \comp^{2n}$,
$$
g(Y) = iG\left(x,\frac{2}{i}\frac{\partial \Psi_0}{\partial x}(x,z)\right),
$$
and
\begeq
\label{eq4.23}
f(X,Y) = 4\Psi_{{\rm herm}}(x-y,z-\theta) = 4\Phi''_{0,\overline{x}x}(y-x)\cdot (\theta -z).
\endeq
We have
\begeq
\label{eq4.24}
f''_{XY} = f''_{(y,\theta),(x,z)} = \left( \begin{array}{ccc}
f''_{yx} & f''_{yz} \\\
f''_{\theta x} & f''_{\theta z}
\end{array} \right) = \left( \begin{array}{ccc}
0 & -4\Phi''_{0,x\overline{x}} \\\
-4\Phi''_{0,\overline{x} x} & 0
\end{array} \right)
\endeq
is invertible, and therefore Proposition \ref{inversion} applies. The proof of Theorem \ref{theo_main_2} is complete.

\medskip
\noindent
{\it Remark}. In the discussion above, we have considered links between the Weyl and Toeplitz quantizations in the complex domain. The purpose of this remark is to observe that such links become more direct when considering the anti-classical, rather than Weyl, quantization. Indeed, let us assume that for simplicity that the pluriharmonic part of $\Phi_0$ vanishes, so that $\Phi_0(x) = \Phi''_{0,x\overline{x}}\overline{x}\cdot x$. Given $p\in L^{\infty}(\Lambda_{\Phi_0})$, let us consider the anti-classical quantization of $p$,
\begeq
\label{eq4.25}
{\rm Op}_0(p)u(x) = \frac{1}{(2\pi)^n}\int\!\!\!\int_{\Gamma_0} e^{i(x-y)\cdot \theta} p(y,\theta) u(y)\, dy\,\wedge d\theta,
\endeq
where the contour of integration $\Gamma_0 \subset \comp^{2n}_{y,\theta}$ is given by
\begeq
\label{eq4.26}
\theta = \frac{2}{i}\frac{\partial \Phi_0}{\partial y}(y) = \frac{2}{i} \Phi''_{0,y\overline{y}}\overline{y}.
\endeq
Along $\Gamma_0$, we have $dy\wedge d\theta = 2^{2n} {\rm det}(\Phi''_{0,y\overline{y}})L(dy)$, provided that the orientation has been chosen properly. It follows that
\begin{multline}
\label{eq4.27}
{\rm Op}_0(p)u(x) = \frac{2^n {\rm det}(\Phi''_{0,y\overline{y}})}{\pi^n} \int e^{2(x-y)\cdot \Phi''_{0,y\overline{y}}\overline{y}}
p\left(y,\frac{2}{i} \Phi''_{0,y\overline{y}}\overline{y}\right) u(y)\, L(dy) \\
= \frac{2^n {\rm det}(\Phi''_{0,y\overline{y}})}{\pi^n} \int e^{2\Psi_0(x,\overline{y})} p\left(y,\frac{2}{i} \Phi''_{0,y\overline{y}}\overline{y}\right) u(y) e^{-2\Phi_0(y)}\, L(dy) = {\rm Top}(p|_{\Lambda_{\Phi_0}})u(x).
\end{multline}

\section{Example: composing special metaplectic Toeplitz operators}
\label{sec_Examples}
\setcounter{equation}{0}
The purpose of this section is to illustrate Theorem \ref{theo_main_1} and Theorem \ref{theo_main_2}, by applying them to an explicit class of metaplectic Toeplitz operators on a model Bargmann space $H_{\Phi_0}(\comp^n)$. It will be assumed throughout this section that
\begeq
\label{eq5.1}
\Phi_0(x) = \frac{\abs{x}^2}{4},\quad x\in \comp^n.
\endeq

\medskip
\noindent
Let
\begeq
\label{eq5.23}
q(x) = \lambda\abs{x}^2, \quad \widetilde{q}(x) = \widetilde{\lambda}\abs{x}^2,\quad x\in \comp^n,
\endeq
where $\lambda, \widetilde{\lambda}\in \comp$ satisfy $\displaystyle {\rm Re}\, \lambda < \frac{1}{4}$, $\displaystyle {\rm Re}\, \widetilde{\lambda} < \frac{1}{4}$, so that the assumptions (\ref{eq1.3}), (\ref{eq1.9}) hold. The Weyl symbol $a$ of the operator ${\rm Top}(e^q)$ has been computed in~\cite[Section 4]{CoHiSjWh} by evaluating the Gaussian integral (\ref{eq4.11.5}), and we recall from that work that it is given by
\begeq
\label{eq5.24}
a\left(x,\frac{2}{i}\frac{\partial \Phi_0}{\partial x}(x)\right) = C\, \exp\left(\frac{\lambda}{1-\lambda}\abs{x}^2\right),\quad x\in \comp^n, \quad C\neq 0.
\endeq
Here we notice that
\begeq
\label{eq5.24.1}
{\rm Re}\, \left(\frac{\lambda}{1 - \lambda}\right) = \frac{1 - \abs{1-2\lambda}^2}{4 \abs{1-\lambda}^2}.
\endeq
Similarly, the Weyl symbol $\widetilde{a}$ of the operator ${\rm Top}(e^{\widetilde{q}})$ has the form
\begeq
\label{eq5.25}
\widetilde{a}\left(x,\frac{2}{i}\frac{\partial \Phi_0}{\partial x}(x)\right) = \widetilde{C}\, \exp\left(\frac{\widetilde{\lambda}}{1-\widetilde{\lambda}}\abs{x}^2\right),\quad x\in \comp^n, \quad \widetilde{C} \neq 0.
\endeq
Following (\ref{eq1.11}), we shall assume that
\begeq
\label{eq5.26}
a\in L^{\infty}(\Lambda_{\Phi_0}),\quad \widetilde{a} \in L^{\infty}(\Lambda_{\Phi_0}),
\endeq
which, in view of (\ref{eq5.24.1}), is equivalent to the conditions
\begeq
\label{eq5.27}
\abs{1-2\lambda}\geq 1, \quad \abs{1-2\widetilde{\lambda}}\geq 1,
\endeq
respectively.

\bigskip
\noindent
Using (\ref{eq5.1}), (\ref{eq5.24}), and (\ref{eq5.25}), we get next
\begeq
\label{eq5.28}
a(x,\xi) = C\, \exp\left(iF(x,\xi)\right),\quad F(x,\xi) = \frac{2\lambda}{1-\lambda} x\cdot \xi, \quad (x,\xi)\in \comp^{2n},
\endeq
\begeq
\label{eq5.29}
\widetilde{a}(x,\xi) = \widetilde{C}\, \exp\left(i\widetilde{F}(x,\xi)\right),\quad \widetilde{F}(x,\xi) = \frac{2\widetilde{\lambda}}{1-\widetilde{\lambda}} x\cdot \xi, \quad (x,\xi) \in \comp^{2n},
\endeq
and recalling (\ref{eq1.8}), we see that the fundamental matrices ${\cal F}$, $\widetilde{\cal F}$ of the quadratic forms $F$, $\widetilde{F}$, respectively, are given by
\begeq
\label{eq5.30}
{\cal F} = \frac{\lambda}{1-\lambda} \begin{pmatrix}1 &0\\ 0 &-1\end{pmatrix}, \quad \widetilde{{\cal F}} = \frac{\widetilde{\lambda}}{1-\widetilde{\lambda}} \begin{pmatrix}1 &0\\ 0 &-1\end{pmatrix}.
\endeq
In particular, $\pm 1 \notin {\rm Spec}({\cal F})$, $\pm 1 \notin {\rm Spec}(\widetilde{\cal F})$. In order to apply Theorem \ref{theo_main_1}, we should also check that $-1\notin {\rm Spec}(\widetilde{{\cal F}}{\cal F})$, where the product $\widetilde{{\cal F}}{\cal F}$ is of the form
\begeq
\label{eq5.31}
\widetilde{{\cal F}}{\cal F} = \frac{\widetilde{\lambda}}{1-\widetilde{\lambda}} \frac{\lambda}{1-\lambda} \begin{pmatrix}1 &0\\ 0 &1\end{pmatrix}.
\endeq
When verifying that $-1$ is not an eigenvalue of $\widetilde{{\cal F}}{\cal F}$, we observe that this is the case provided that at least one of the inequalities in (\ref{eq5.27}) is strict. Indeed, it follows from the remark following Proposition \ref{Weyl} that the strict inequality
$\abs{1-2\lambda} > 1$, say, implies that the complex linear canonical transformation associated to the Fourier integral operator ${\rm Top}(e^q)$ is strictly positive relative to $\Lambda_{\Phi_0}$, and the general arguments of Section \ref{sec_Weyl} imply then that $-1\notin {\rm Spec}(\widetilde{{\cal F}}{\cal F})$. To discuss the remaining case, it suffices to make the following elementary observation.

\begin{lemma}
\label{lemma_5.4}
Let $\lambda,\widetilde{\lambda} \in \comp$ be such that $\displaystyle {\rm Re}\, \lambda < \frac{1}{4}$,
$\displaystyle {\rm Re}\, \widetilde{\lambda} < \frac{1}{4}$, and assume that
\begeq
\label{eq5.32}
\abs{1-2\lambda}= 1, \quad \abs{1-2\widetilde{\lambda}} = 1.
\endeq
Then we have
\begeq
\label{eq5.33}
\frac{\widetilde{\lambda}}{1-\widetilde{\lambda}}\frac{\lambda}{1-\lambda} \neq -1.
\endeq
\end{lemma}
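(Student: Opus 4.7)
I would prove this by a direct computation, parametrizing the unit-circle constraint.

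Write $\mu = 1-2\lambda$ and $\widetilde{\mu} = 1 - 2\widetilde{\lambda}$. The hypotheses $|\mu| = |\widetilde{\mu}| = 1$ together with $\mathrm{Re}\,\lambda, \mathrm{Re}\,\widetilde{\lambda} < 1/4$ give $\mathrm{Re}\,\mu > 1/2$ and $\mathrm{Re}\,\widetilde{\mu} > 1/2$; in particular $\mu, \widetilde{\mu}\neq -1$, so $1-\lambda = (1+\mu)/2 \neq 0$ and likewise for $\widetilde\lambda$. Substituting $\lambda = (1-\mu)/2$ and $1-\lambda = (1+\mu)/2$, we obtain
$$
\frac{\lambda}{1-\lambda} = \frac{1-\mu}{1+\mu},
$$
and similarly for the tilded quantity.

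The first step is to show that each factor is purely imaginary. Writing $\mu = e^{i\theta}$ with $\theta \in (-\pi/3,\pi/3)$ (this range coming from $\mathrm{Re}\,\mu > 1/2$), a direct calculation gives
$$
\frac{1-\mu}{1+\mu} = \frac{e^{-i\theta/2}-e^{i\theta/2}}{e^{-i\theta/2}+e^{i\theta/2}} = -i \tan(\theta/2).
$$
Since $\theta/2 \in (-\pi/6, \pi/6)$, we have $|\tan(\theta/2)| < 1/\sqrt{3}$, so
$$
\left|\frac{\lambda}{1-\lambda}\right| < \frac{1}{\sqrt 3},
$$
and the same bound holds for $\widetilde\lambda/(1-\widetilde\lambda)$.

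Multiplying the two bounds yields
$$
\left|\frac{\widetilde{\lambda}}{1-\widetilde{\lambda}}\frac{\lambda}{1-\lambda}\right| < \frac{1}{3} < 1,
$$
so the product cannot equal $-1$, which has modulus $1$. There is no real obstacle here; the only point requiring a little care is using the strict inequality $\mathrm{Re}\,\lambda < 1/4$ (rather than just $\leq$) together with $|1-2\lambda|=1$ to rule out the boundary case $\theta = \pm\pi/3$, but this is immediate since $\mathrm{Re}\,\mu > 1/2$ strictly.
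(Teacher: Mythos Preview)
Your proof is correct. Both your argument and the paper's hinge on the same key observation that each factor $\lambda/(1-\lambda)$ is purely imaginary under the circle constraint $|1-2\lambda|=1$, but you reach it by a different computation: you parametrize $1-2\lambda=e^{i\theta}$ and use the half-angle identity $(1-e^{i\theta})/(1+e^{i\theta})=-i\tan(\theta/2)$, whereas the paper extracts from $|1-2\lambda|=1$ the algebraic relation $\mathrm{Re}\,\lambda=|\lambda|^2$, which yields $\lambda/(1-\lambda)=i\,\mathrm{Im}\,\lambda/(1-\mathrm{Re}\,\lambda)$ directly. Your route then gives the quantitative bound $|\lambda/(1-\lambda)|<1/\sqrt{3}$ from $|\theta|<\pi/3$, so the product has modulus $<1/3$; the paper instead computes $|\lambda/(1-\lambda)|^{2}=\mathrm{Re}\,\lambda/(1-\mathrm{Re}\,\lambda)$ and concludes by observing that the product of these two ratios equals $1$ iff $\mathrm{Re}\,\lambda+\mathrm{Re}\,\widetilde{\lambda}=1$, which is excluded by $\mathrm{Re}\,\lambda,\mathrm{Re}\,\widetilde{\lambda}<1/4$. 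The two arguments are equivalent (indeed the paper's expression also gives the $1/3$ bound once one notes $0\le\mathrm{Re}\,\lambda<1/4$), and yours is perhaps slightly more transparent geometrically.
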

\begin{proof}
It follows from (\ref{eq5.32}) that we have
\begeq
\label{eq5.34}
{\rm Re}\, \lambda = \abs{\lambda}^2, \quad {\rm Re}\, \widetilde{\lambda} = \abs{\widetilde{\lambda}}^2,
\endeq
and therefore,
\begeq
\label{eq5.35}
\abs{1-\lambda}^2 = 1 - {\rm Re}\, \lambda, \quad \abs{1-\widetilde{\lambda}}^2 = 1 - {\rm Re}\, \widetilde{\lambda},
\endeq
implying that
\begeq
\label{eq5.36}
\frac{\lambda}{1-\lambda} = \frac{i{\rm Im}\, \lambda}{1-{\rm Re}\, \lambda}, \quad \frac{\widetilde{\lambda}}{1-\widetilde{\lambda}} = \frac{i{\rm Im}\, \widetilde{\lambda}}{1-{\rm Re}\, \widetilde{\lambda}}.
\endeq
It suffices to check that
$$
\left(\frac{\widetilde{\lambda}}{1-\widetilde{\lambda}}\frac{\lambda}{1-\lambda}\right)^2 = \frac{({\rm Im}\, \lambda)^2 ({\rm Im}\, \widetilde{\lambda})^2}{(1-{\rm Re}\, \lambda)^2 (1-{\rm Re}\, \widetilde{\lambda})^2} \neq 1,
$$
and to this end, we observe that (\ref{eq5.34}) gives
\begeq
\label{eq5.37}
\frac{({\rm Im}\, \lambda)^2 ({\rm Im}\, \widetilde{\lambda})^2}{(1-{\rm Re}\, \lambda)^2 (1-{\rm Re}\, \widetilde{\lambda})^2} =
\frac{{\rm Re}\, \lambda\, {\rm Re}\, \widetilde{\lambda}}{(1-{\rm Re}\, \lambda) (1-{\rm Re}\, \widetilde{\lambda})} \neq 1,
\endeq
since ${\rm Re}\, \lambda + {\rm Re}\, \widetilde{\lambda} < 1$. The proof is complete.
\end{proof}

\bigskip
\noindent
An application of Theorem~\ref{theo_main_1} gives therefore that
\begeq
\label{eq5.38}
{\rm Top}(e^{\widetilde{q}}) \circ {\rm Top}(e^q) = C\, {\rm Op}^w(e^{i\widehat{F}}),\quad C\neq 0,
\endeq
where the fundamental matrix $\widehat{\cal F}$ of the quadratic form $\widehat{F}$ is given by
\begin{multline}
\label{eq5.39}
\widehat{\cal F} = (1 + {\cal F}) (1 + \widetilde{\cal F}{\cal F})^{-1} (1 + \widetilde{\cal F})-1 \\
= (1 + \widetilde{\cal F}{\cal F})^{-1} (1 + \widetilde{\cal F})(1 + {\cal F}) - 1 = (1 + \widetilde{\cal F}{\cal F})^{-1}({\cal F} + \widetilde{\cal F}).
\end{multline}
Here we have used (\ref{eq1.14}) as well as the fact that the matrices ${\cal F}$, $\widetilde{\cal F}$ commute, in view of (\ref{eq5.30}). A simple computation using (\ref{eq5.30}), (\ref{eq5.31}), and (\ref{eq5.39}) gives that
\begeq
\label{eq5.40}
\widehat{\cal F} = \frac{\lambda + \widetilde{\lambda} - 2\lambda \widetilde{\lambda}}{1 - \lambda - \widetilde{\lambda} + 2\lambda \widetilde{\lambda}} \begin{pmatrix}1 &0\\ 0 &-1\end{pmatrix}.
\endeq

\medskip
\noindent
The computations above may be summarized in the following result.

\begin{prop}
\label{prop_example}
Let $\displaystyle \Phi_0(x) = \frac{\abs{x}^2}{4}$, and let $q(x) = \lambda \abs{x}^2$, $\widetilde{q}(x) = \widetilde{\lambda}\abs{x}^2$,
with $\lambda, \widetilde{\lambda} \in \comp$ such that $\displaystyle {\rm Re}\, \lambda < \frac{1}{4}$, $\displaystyle {\rm Re}\, \widetilde{\lambda} < \frac{1}{4}$. Assume that
\begeq
\label{eq5.40.1}
\abs{1-2\lambda} \geq 1, \quad \abs{1-2\widetilde{\lambda}}\geq 1.
\endeq
We have
\begeq
\label{eq5.41}
{\rm Top}(e^{\widetilde{q}}) \circ {\rm Top}(e^q) = C\, {\rm Op}^w(e^{i\widehat{F}}): H_{\Phi_0}(\comp^n) \rightarrow H_{\Phi_0}(\comp^n),
\endeq
for some $C\neq 0$. Here the holomorphic quadratic form $\widehat{F}$ is given by
\begeq
\label{eq5.42}
\widehat{F}(x,\xi) = \frac{2(\lambda + \widetilde{\lambda} - 2\lambda \widetilde{\lambda})}{1 - \lambda - \widetilde{\lambda} + 2\lambda \widetilde{\lambda}}\, x\cdot \xi, \quad (x,\xi)\in \comp^{2n}.
\endeq
We have
$$
{\rm Im}\, \widehat{F}\left(x,\frac{2}{i}\frac{\partial \Phi_0}{\partial x}(x)\right)\geq 0,\quad x\in \comp^n,
$$
and the fundamental matrix of $\widehat{F}$ does not have the eigenvalues $\pm 1$.
\end{prop}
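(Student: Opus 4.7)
The plan is that almost all the ingredients for this proposition have already been assembled in the preceding discussion (equations (5.23)--(5.40) and Lemma \ref{lemma_5.4}), so the proof amounts to checking the hypotheses of Theorem \ref{theo_main_1}, invoking it, and then reading off the explicit expression for $\widehat{F}$ from the formula (\ref{eq5.40}) for $\widehat{{\cal F}}$ via the recipe (\ref{eq3.9}).

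First, I would verify the spectral conditions on ${\cal F}$, $\widetilde{{\cal F}}$: from (\ref{eq5.30}), these matrices have eigenvalues $\pm \lambda/(1-\lambda)$ and $\pm \widetilde{\lambda}/(1-\widetilde{\lambda})$, and since ${\rm Re}\,\lambda, {\rm Re}\,\widetilde{\lambda} < 1/4$ forces $\lambda,\widetilde{\lambda} \neq 1/2$, none of these equal $\pm 1$. The boundedness hypothesis (\ref{eq1.11}) translates, via (\ref{eq5.24.1}), precisely into (\ref{eq5.40.1}). The remaining hypothesis, $-1\notin {\rm Spec}(\widetilde{{\cal F}}{\cal F})$, requires a short case split: if at least one of the inequalities in (\ref{eq5.40.1}) is strict, then the corresponding canonical transformation is strictly positive relative to $\Lambda_{\Phi_0}$ (see the remark following Proposition \ref{Weyl} and the ellipticity equivalence (\ref{eq3.17})), so $\widehat{\kappa} = \widetilde{\kappa}\circ\kappa$ is also strictly positive, hence $-1\notin {\rm Spec}(\widehat{\kappa})$, which by (\ref{eq3.21}) is equivalent to $1+\widetilde{{\cal F}}{\cal F}$ being invertible, i.e., $-1\notin{\rm Spec}(\widetilde{{\cal F}}{\cal F})$. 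The remaining boundary case, where both inequalities are equalities, is settled by Lemma \ref{lemma_5.4}.

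With the hypotheses verified, I invoke Theorem \ref{theo_main_1} to obtain (\ref{eq5.41}) together with the formula (\ref{eq1.14}) for $\widehat{{\cal F}}$. Since both ${\cal F}$ and $\widetilde{{\cal F}}$ in (\ref{eq5.30}) are scalar multiples of the diagonal matrix $\text{diag}(1,-1)$, they commute, which allows me to rearrange (\ref{eq1.14}) as in (\ref{eq5.39}) and then compute directly, yielding (\ref{eq5.40}). From $\widehat{{\cal F}} = s\,\text{diag}(1,-1)$ with $s = (\lambda+\widetilde{\lambda}-2\lambda\widetilde{\lambda})/(1-\lambda-\widetilde{\lambda}+2\lambda\widetilde{\lambda})$, the reconstruction formula (\ref{eq3.9}) gives $\widehat{F}(x,\xi) = -J\widehat{{\cal F}}(x,\xi)\cdot(x,\xi) = 2s\,x\cdot\xi$, which is (\ref{eq5.42}).

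Finally, ${\rm Im}\,\widehat{F}|_{\Lambda_{\Phi_0}}\geq 0$ is inherited directly from (\ref{eq1.13.1}) in Theorem \ref{theo_main_1}, and the spectral condition $\pm 1 \notin {\rm Spec}(\widehat{{\cal F}})$ reduces to checking that $s\neq \pm 1$. Writing $\alpha = \lambda/(1-\lambda)$, $\beta = \widetilde{\lambda}/(1-\widetilde{\lambda})$, one has $s = (\alpha+\beta)/(1+\alpha\beta)$, and the identity $s\mp 1 = \mp(1\mp\alpha)(1\mp\beta)/(1+\alpha\beta)$ shows that $s = \pm 1$ would force $\alpha = \pm 1$ or $\beta = \pm 1$, contradicting the already-established condition $\pm 1 \notin {\rm Spec}({\cal F}),{\rm Spec}(\widetilde{{\cal F}})$. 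No step presents a serious obstacle; the only mildly delicate point is the boundary case in the second paragraph, where strict positivity is unavailable and one genuinely needs the algebraic identity of Lemma \ref{lemma_5.4}.
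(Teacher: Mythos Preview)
Your proposal is correct and follows essentially the same approach as the paper, which presents Proposition~\ref{prop_example} as a summary of the computations (\ref{eq5.23})--(\ref{eq5.40}) together with the case split (strict inequality via strict positivity, equality case via Lemma~\ref{lemma_5.4}) and an application of Theorem~\ref{theo_main_1}. Your explicit algebraic verification that $\pm 1 \notin {\rm Spec}(\widehat{{\cal F}})$ is a small addition; in the paper this is left implicit, being a consequence of the general discussion in Section~\ref{sec_Weyl} (see (\ref{eq3.11}) and Proposition~\ref{Weyl}).
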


\bigskip
\noindent
We shall next apply Theorem \ref{theo_main_2} to the Weyl quantization in (\ref{eq5.41}). To this end, we observe that the holomorphic quadratic form
\begin{multline}
\label{eq5.43}
i \widehat{F}\left(x,\frac{2}{i}\frac{\partial \Psi_0}{\partial x}(x,z)\right) + 4 \Psi_{{\rm herm}}(x,z)
= i\frac{2(\lambda + \widetilde{\lambda} - 2\lambda \widetilde{\lambda})}{1 - \lambda - \widetilde{\lambda} + 2\lambda \widetilde{\lambda}}\,
x\cdot \frac{2}{i}\frac{z}{4} + x\cdot z \\ = \frac{1}{1 - \lambda - \widetilde{\lambda} + 2\lambda \widetilde{\lambda}}\, x\cdot z
\end{multline}
is non-degenerate on $\comp^{2n}_{x,z}$, and following (\ref{eq1.15.04}), let us set
\begeq
\label{eq5.44}
Q^{\pi}(y,\theta) = {\rm vc}_{x,z} \left((x-y)\cdot (z-\theta) + \frac{\lambda + \widetilde{\lambda} - 2\lambda \widetilde{\lambda}}{1 - \lambda - \widetilde{\lambda} + 2\lambda \widetilde{\lambda}}\, x\cdot z\right).
\endeq
We obtain after a straightforward computation that
\begeq
\label{eq5.45}
Q^{\pi}(y,\theta) = \left(\lambda + \widetilde{\lambda} - 2\lambda \widetilde{\lambda}\right) y\cdot \theta,
\endeq
and an application of Theorem \ref{theo_main_2} gives us therefore the following result.

\begin{prop}
\label{prop_example1}
Let us make the same assumptions as in Proposition {\rm \ref{prop_example}} and assume furthermore that
\begeq
\label{eq5.46}
{\rm Re}\, \left(\lambda + \widetilde{\lambda} - 2\lambda \widetilde{\lambda}\right) < \frac{1}{4}.
\endeq
Then we have
\begeq
\label{eq5.47}
{\rm Top}(e^{\widetilde{q}}) \circ {\rm Top}(e^q) = C\, {\rm Top}(e^{\widehat{q}}): H_{\Phi_0}(\comp^n) \rightarrow H_{\Phi_0}(\comp^n),
\endeq
for some constant $C\neq 0$, where $\widehat{q}(x) = \left(\lambda + \widetilde{\lambda} - 2\lambda \widetilde{\lambda}\right) \abs{x}^2$.
\end{prop}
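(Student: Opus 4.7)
The plan is to apply Theorem \ref{theo_main_2} directly to the quadratic form $G = \widehat{F}$ from Proposition \ref{prop_example}, and to verify that the resulting Toeplitz symbol $e^{Q}$ matches $e^{\widehat{q}}$ up to a nonzero multiplicative constant, using the computation of $Q^{\pi}$ already carried out in (\ref{eq5.44})--(\ref{eq5.45}).

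First, I would check that all the hypotheses of Theorem \ref{theo_main_2} are in force. The inequality ${\rm Im}\,\widehat{F}|_{\Lambda_{\Phi_0}} \geq 0$ and the spectral condition $\pm 1 \notin {\rm Spec}(\widehat{\mathcal{F}})$ are supplied by Proposition \ref{prop_example}. The non-degeneracy on $\comp^{2n}_{x,z}$ of the polarized quadratic form in (\ref{eq1.15.03}) was verified in (\ref{eq5.43}), where it was shown to equal $(1-\lambda-\widetilde{\lambda}+2\lambda\widetilde{\lambda})^{-1}\, x\cdot z$; by the hypothesis (\ref{eq5.40.1}) of Proposition \ref{prop_example}, combined with Lemma \ref{lemma_5.4} when both inequalities are equalities, the denominator $1-\lambda-\widetilde{\lambda}+2\lambda\widetilde{\lambda}$ does not vanish, so this form is non-degenerate. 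With $Q^{\pi}$ given by (\ref{eq5.45}), the restriction to the anti-diagonal $\theta = \overline{y}$ reads
\begeq
\label{eq:planQ}
Q(y) = Q^{\pi}(y,\overline{y}) = \left(\lambda + \widetilde{\lambda} - 2\lambda\widetilde{\lambda}\right)\abs{y}^2.
\endeq

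Since $\Phi_0(x) = \abs{x}^2/4$ is already Hermitian, we have $\Phi_{\rm herm}(y) = \abs{y}^2/4$, and therefore the crucial condition (\ref{eq1.15.1}) becomes exactly
$$
{\rm Re}\,\left(\lambda + \widetilde{\lambda} - 2\lambda\widetilde{\lambda}\right)\abs{y}^2 < \frac{\abs{y}^2}{4},\quad 0\neq y\in\comp^n,
$$
which is the added hypothesis (\ref{eq5.46}) of the Proposition. Theorem \ref{theo_main_2} then applies and yields a nonzero constant $C'$ such that
\begeq
\label{eq:planW}
{\rm Op}^w(e^{i\widehat{F}}) = C'\,{\rm Top}(e^{Q}) : H_{\Phi_0}(\comp^n) \rightarrow H_{\Phi_0}(\comp^n).
\endeq

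Finally, I would combine (\ref{eq5.41}) from Proposition \ref{prop_example} with (\ref{eq:planW}) and (\ref{eq:planQ}), observing that $Q(x) = \widehat{q}(x)$ by definition of $\widehat{q}$, and absorb the various non-vanishing scalar factors into a single constant $C\neq 0$ to obtain (\ref{eq5.47}). There is really no obstacle here beyond bookkeeping: the key calculations (non-degeneracy of the phase, identification of $Q^{\pi}$ via the critical value formula) have already been performed in (\ref{eq5.43})--(\ref{eq5.45}), and the only genuinely new ingredient is the translation of the abstract admissibility condition (\ref{eq1.15.1}) into the very concrete inequality (\ref{eq5.46}), which is immediate once one uses that $\Phi_{\rm herm}(y)$ coincides with $\Phi_0(y)$ for the model weight (\ref{eq5.1}).
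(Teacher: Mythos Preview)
Your proposal is correct and follows essentially the same route as the paper: verify the hypotheses of Theorem~\ref{theo_main_2} for $G=\widehat{F}$ using the computations in (\ref{eq5.43})--(\ref{eq5.45}), observe that condition (\ref{eq1.15.1}) reduces to (\ref{eq5.46}) since $\Phi_{\rm herm}=\Phi_0$ for the model weight, and then combine with (\ref{eq5.41}). Your additional remark that the non-vanishing of $1-\lambda-\widetilde{\lambda}+2\lambda\widetilde{\lambda}$ follows from the discussion preceding Proposition~\ref{prop_example} (strict positivity or Lemma~\ref{lemma_5.4}) is a helpful clarification, though one could equally note that this is already implicit in the well-definedness of $\widehat{\cal F}$ in (\ref{eq5.40}).
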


\bigskip
\noindent
{\it Remark}. In the case when $\lambda = i$, $\widetilde{\lambda} = -i$, the result of Proposition \ref{prop_example1} has been observed in~\cite{BCH}.

\medskip
\noindent
{\it Remark}. Let us set, following~\cite{LC01},
\begeq
\label{eq5.48}
\lambda = \widetilde{\lambda} = \frac{1 + 2i}{5},
\endeq
so that (\ref{eq5.40.1}) holds, with the equality sign. Proposition \ref{prop_example} applies in this case, and we find that
the quadratic form $\widehat{F}$ given in (\ref{eq5.42}) satisfies
$$
{\rm Im}\, \widehat{F}\left(x,\frac{2}{i}\frac{\partial \Phi_0}{\partial x}(x)\right) = 0,\quad x\in \comp^n.
$$
It has been established in~\cite{LC01} that the composition in (\ref{eq5.41}) satisfies
\begeq
\label{eq5.49}
\norm{\left({\rm Top}(e^q)\right)^2 - {\rm Top}(p)}_{{\cal L}(H_{\Phi_0}({\bf C}^n), H_{\Phi_0}({\bf C}^n))} \geq 1,
\endeq
for all $p: \comp^n \rightarrow \comp$ measurable such that
\begeq
\label{eq5.50}
e^{2\Psi_0(\cdot,\overline{y})} \in {\cal D}({\rm Top}(p)),\quad y\in \comp^n.
\endeq
Notice that in this case we have
\begeq
\label{eq5.51}
{\rm Re}\, \left(2\lambda - 2\lambda^2\right) = \frac{16}{25} > \frac{1}{4},
\endeq
and therefore the assumption (\ref{eq1.15.1}) in Theorem \ref{theo_main_2} cannot be removed entirely.

\bigskip
\noindent
{\it Remark}. The purpose of this remark is to discuss the composition of more general metaplectic Toeplitz operators of the form ${\rm Top}(e^q)$, considered in~\cite[Theorem 4.1]{CoHiSj23}. Thus, let
\begeq
\label{eq5.52}
q(x) = \lambda\abs{x}^2 + A\overline{x}\cdot \overline{x}, \quad \widetilde{q}(x) = \widetilde{\lambda}\abs{x}^2 + \widetilde{A}\overline{x}\cdot \overline{x}, \quad x\in \comp^n,
\endeq
where $\lambda$, $\widetilde{\lambda} \in \comp$ and $A$, $\widetilde{A}$ are $n\times n$ complex symmetric matrices, such that
\begeq
\label{eq5.53}
{\rm Re}\, \lambda + \norm{A} < \frac{1}{4}, \quad {\rm Re}\, \widetilde{\lambda} + \norm{\widetilde{A}} < \frac{1}{4},
\endeq
and
\begeq
\label{eq5.54}
4\norm{A} \leq \frac{1-\abs{\gamma}^2}{\abs{\gamma}^2}, \quad 4\norm{\widetilde{A}} \leq \frac{1-\abs{\widetilde{\gamma}}^2}{\abs{\widetilde{\gamma}}^2}.
\endeq
Here the norm is Euclidean and
\begeq
\label{eq5.55}
\gamma = \frac{1}{1-2\lambda}, \quad \widetilde{\gamma} = \frac{1}{1-2\widetilde{\lambda}}.
\endeq
It has been established in~\cite[Theorem 4.1]{CoHiSj23} that the conditions (\ref{eq5.54}) are equivalent to the boundedness of the operators
${\rm Top}(e^q)$, ${\rm Top}(e^{\widetilde{q}})$, respectively, on the Bargmann space $H_{\Phi_0}(\comp^n)$.

\medskip
\noindent
When computing the bounded operator ${\rm Top}(e^{\widetilde{q}})\circ {\rm Top}(e^q)$, rather than applying Theorem \ref{theo_main_1}, following~\cite{CoHiSjWh},~\cite{CoHiSj23}, we shall consider the composition acting directly on the space of coherent states given by
\begeq
\label{eq5.55.1}
k_w(x) = C_{\Phi_0} e^{2\Psi_0(x,\overline{w}) - \Phi_0(w)},\quad w\in \comp^n.
\endeq
Here the constant $C_{\Phi_0} > 0$ is chosen suitably so that $\norm{k_w}_{H_{\Phi_0}({\bf C}^n)} = 1$, $w\in \comp^n$. A straightforward computation making use of~\cite[equations (2.32), (2.33)]{CoHiSj23}, or alternatively, of~\cite[equations (4.13), (4.14), (4.15)]{CoHiSjWh} shows  that
\begeq
\label{eq5.56}
\left({\rm Top}(e^{\widetilde{q}})\circ {\rm Top}(e^q)e^{2\Psi_0(\cdot,\overline{w})}\right)(x) = (\widetilde{\gamma}\gamma)^n e^{2\Psi_0(x,\widetilde{\gamma}\gamma\overline{w})} \exp\left(\widetilde{A}\widetilde{\gamma}\gamma\overline{w}\cdot \widetilde{\gamma}\gamma\overline{w} + A\gamma \overline{w}\cdot \gamma \overline{w}\right).
\endeq
Here we have also used the following more precise version of~\cite[equation (4.13)]{CoHiSjWh},
\begeq
\label{eq5.57}
\left({\rm Top}(e^{\lambda\abs{x}^2})e^{2\Psi_0(\cdot,\overline{w})}\right)(x) = \gamma^n e^{2\Psi_0(x,\gamma \overline{w})},
\endeq
which follows by the exact stationary phase. Setting
\begeq
\label{eq5.58}
\widehat{\gamma} = \widetilde{\gamma}\gamma, \quad \widehat{A} = \widetilde{A} + \frac{1}{\widetilde{\gamma}^2}A,
\endeq
we obtain from (\ref{eq5.56}) that
\begeq
\label{eq5.59}
\left({\rm Top}(e^{\widetilde{q}})\circ {\rm Top}(e^q)k_w\right)(x) = C_{\Phi_0} \widehat{\gamma}^n e^{2\Psi_0(x,\widehat{\gamma}\overline{w}) - \Phi_0(w)}\,\exp\left(\widehat{A}\widehat{\gamma}\overline{w}\cdot \widehat{\gamma}\overline{w}\right).
\endeq
Let us now set
\begeq
\label{eq5.60}
\widehat{q}(x) = \widehat{\lambda}\abs{x}^2 + \widehat{A}\overline{x}\cdot \overline{x},\quad \widehat{\lambda} = \lambda + \widetilde{\lambda} - 2\lambda \widetilde{\lambda},
\endeq
so that
$$
\widehat{\gamma} = \frac{1}{1-2\widehat{\lambda}}.
$$
Assuming that
\begeq
\label{eq5.61}
{\rm Re}\, \widehat{\lambda} + \norm{\widehat{A}} < \frac{1}{4},
\endeq
so that the Toeplitz operator ${\rm Top}(e^{\widehat{q}})$ is densely defined, we conclude, in view of (\ref{eq5.59}), that the following identity holds on the common dense domain given by the linear span of the coherent states $k_w$, $w\in \comp^n$.
\begeq
\label{eq5.62}
{\rm Top}(e^{\widetilde{q}})\circ {\rm Top}(e^q) = {\rm Top}(e^{\widehat{q}}).
\endeq
Here we observe that in view of (\ref{eq5.54}), we have
\begeq
\label{eq5.63}
4\norm{\widehat{A}} \leq 4\norm{\widetilde{A}} + \frac{4}{\abs{\widetilde{\gamma}}^2} \norm{A} \leq  \frac{1-\abs{\widetilde{\gamma}}^2}{\abs{\widetilde{\gamma}}^2} + \frac{1-\abs{\gamma}^2}{\abs{\widetilde{\gamma}}^2\abs{\gamma}^2}
= \frac{1-\abs{\widehat{\gamma}^2}}{\abs{\widehat{\gamma}^2}},
\endeq
and hence the operator ${\rm Top}(e^{\widehat{q}})$ is bounded on $H_{\Phi_0}(\comp^n)$, in view of~\cite[Theorem 4.1]{CoHiSj23}. We obtain the composition result,
\begeq
\label{eq5.64}
{\rm Top}(e^{\widetilde{q}})\circ {\rm Top}(e^q) = {\rm Top}(e^{\widehat{q}}): H_{\Phi_0}(\comp^n) \rightarrow H_{\Phi_0}(\comp^n),
\endeq
provided that (\ref{eq5.53}), (\ref{eq5.54}), and (\ref{eq5.61}) hold.

\begin{appendix}
\section{Adjoints of complex FIOs}
\label{appA}
\setcounter{equation}{0}
The purpose of this appendix is to continue the discussion started in~\cite[Appendix A]{CoHiSj} and to review some of the basic facts concerning adjoints of metaplectic Fourier integral operators in the complex domain. Let $\Phi_0$ be a strictly plurisubharmonic quadratic form on $\comp^n$, and let us recall from~\cite{Sj95},~\cite{HiSj15} that the orthogonal projection
\begeq
\label{app1}
\Pi_{\Phi_0}: L^2(\comp^n, e^{-2\Phi_0}L(dx)) \rightarrow H_{\Phi_0}(\comp^n)
\endeq
is given by
\begeq
\label{app2}
\Pi_{\Phi_0}u(x) = a_0 \int e^{2(\Psi_0(x,\overline{y}) - \Phi_0(y))} u(y)\, dy\,d\overline{y},\quad a_0\neq 0.
\endeq
Here $\Psi_0$ is the polarization of $\Phi_0$. In polarized form we may write for $u$ holomorphic,
\begeq
\label{app3}
\Pi_{\Phi_0} u(x) = \iint_{\Gamma} e^{2(\Psi (x,\theta)-\Psi (y,\theta))} a_0 u(y)dy\,d\theta.
\endeq
Here the contour $\Gamma\subset \comp^{2n}$ is given by $\theta = \overline{y}$. Omitting the contour of integration, we obtain a formal factorization
\begeq
\label{app4}
\Pi_{\Phi_0} = A\circ B,
\endeq
where
\begeq
\label{app5}
Av(x) = \int e^{2\Psi_0(x,\theta)} a_0 v(\theta)\, d\theta,\quad Bu(\theta) = \int e^{-2\Psi_0(y,\theta)} u(y)\, dy.
\endeq
We notice that from this point of view, the construction of $\Pi_{\Phi_0}$ is reduced to the problem of inverting the operator $B$, i.e. finding a suitable constant amplitude $a_0\neq 0$ such that $AB=1$ in suitable $H_{{\Phi }}$-spaces. The canonical transformations associated to $A$, $B$ are given by
\begin{equation}
\label{app6}
\kappa_A = \kappa _{2\Psi_0/i}: \left(\theta,-\frac{2}{i}\partial_{\theta} \Psi_0 (x,\theta)\right)\mapsto
\left(x,\frac{2}{i}\partial_x \Psi_0(x,\theta)\right),
\end{equation}
and
\begin{equation}
\label{app7}
\kappa_B = \kappa _{2\Psi_0/i}^{-1}:\ \left(y,\frac{2}{i}\partial_y \Psi_0(y,\theta)\right)\mapsto \left(\theta,-\frac{2}{i}\partial_{\theta} \Psi_0 (y,\theta)\right),
\end{equation}
respectively. Since $\kappa_B$ is equal to the inverse of $\kappa _A$ it is clear that $AB$ is a multiple of the identity operator, and that we can choose the constant amplitude $a_0\neq 0$ in (\ref{app5}) so that $AB=1$.

\medskip
\noindent
Putting $\theta = \overline{y}$ in (\ref{app7}), we get
$$
\kappa _B:\ \left(y,\frac{2}{i}\partial _y \Phi_0(y)\right)\mapsto \left(\overline{y},-\frac{2}{i}\partial _{\overline{y}}\Phi_0
(y)\right)=\left(\overline{y},-\frac{2}{i}\partial (\Phi_0 \circ \dagger)(\overline{y})\right)\in \Lambda _{-\Phi_0 \circ \dagger}=\dagger (\Lambda_{\Phi_0}),
$$
where $\dagger $ is the operator of complex conjugation of complex numbers or elements in ${\bf C}^N$, $\dagger(z) = \overline{z}$. Thus we have the following map between two maximally totally real subspaces of $\comp^{2n}$,
$$
\kappa_B:\, \Lambda_{\Phi_0} \rightarrow \Lambda _{-\Phi_0\circ \dagger},
$$
and $\kappa _B$ in (\ref{app7}) is the holomorphic extension of this map. Here we observe that $-\Phi_0 \circ \dagger$ is strictly
pluri-{\it super}-harmonic so it would not be meaningful to say that ``$B:\, H_{\Phi_0} \to H_{-\Phi_0 \circ \dagger}$'', or that
``$A:\, H_{-\Phi_0 \circ\dagger}\to H_{\Phi_0}$.''

\bigskip
\noindent
Let $\Phi_j$, $j= 1,2$, be strictly plurisubharmonic quadratic forms on $\comp^n$, and let $\kappa: \comp^{2n} \rightarrow \comp^{2n}$ be a complex linear canonical transformation which is positive relative to $(\Lambda_{\Phi_2}, \Lambda_{\Phi_1})$, in the sense that
\begeq
\label{app7.0.1}
\frac{1}{i} \biggl(\sigma(\kappa(\rho), \iota_{\Phi_2} \kappa(\rho)) - \sigma(\rho, \iota_{\Phi_1}(\rho))\biggr) \geq 0,\quad \rho \in \comp^{2n}.
\endeq
Here, as above, $\iota_{\Phi_j}: \comp^{2n} \rightarrow \comp^{2n}$ is the unique anti-linear involution which is equal to the identity on the maximally totally real subspace $\Lambda_{\Phi_j} \subset \comp^{2n}$, $j =1,2$.

\medskip
\noindent
Let $A$ be a metaplectic Fourier integral operator quantizing $\kappa$, and let us recall from~\cite{CoHiSj} that we can realize $A$ as a linear continuous map
\begeq
\label{app7.1}
A: H_{\Phi_1}(\comp^{n}) \rightarrow H_{\Phi_2}(\comp^n).
\endeq
We shall then also write $\kappa = \kappa_A$. With $\Psi_1 = \Phi_1^{\pi}$ being the polarization of $\Phi_1$, let
\begeq
\label{app8}
\Pi_1 u(x) = a_1 \int\!\!\!\int e^{2\Psi_1(x,\overline{y})}u(y)e^{-2\Phi_1(y)} \frac{dy\, d\overline{y}}{(2i)^n}, \quad a_1 \neq 0,
\endeq
be the Bergman projection: $L^2(\comp^n,e^{-2\Phi_1}L(dx)) \to H_{\Phi_1}(\comp^n)$. Here we observe that the $(n,n)$--form
$$
\frac{dy\, d\overline{y}}{(2i)^n} = \frac{1}{(2i)^n} dy_1 \wedge \ldots\, \wedge dy_n \wedge d\overline{y}_1 \wedge \ldots\, \wedge d\overline{y}_n
$$
can be naturally identified with the Lebesgue volume form $L(dy)$ on $\comp^n$. An application of~\cite[Theorem A.1]{CoHiSj} allows us to write for $u\in H_{\Phi_1}(\comp^n)$,
\begeq
\label{app9}
Au(x) = \int\!\!\!\int K_A(x,\overline{y}) u(y)e^{-2\Phi _1(y)}\frac{dy\,d\overline{y}}{(2i)^n}.
\endeq
Here the kernel $K_A(x,\theta)$ is holomorphic on $\comp^n_x \times \comp^n_{\theta}$, with $y\mapsto \overline{K_A(x,\overline{y})} \in H_{\Phi_1}(\comp^n)$, uniquely determined by (\ref{app9}). Arguing as in~\cite{CoHiSj} we see, using the reproducing property of $\Pi_1$ on $H_{\Phi_1}(\comp^n)$, that
\begeq
\label{app10}
K_A(x,\theta) = A\left(a_1 e^{2\Psi_1(\cdot,\theta)}\right)(x),
\endeq
see also~\cite[equation (3.9)]{CoHiSj}. We infer furthermore from the discussion in~\cite[Section 3]{CoHiSj} that the kernel $K_A$ in (\ref{app10}) is of the form
\begeq
\label{app12}
K_A(x,\theta) = \widehat{a}\,e^{2\Psi(x,\theta)},
\endeq
for some $\widehat{a}\in \comp$, where $\Psi(x,\theta)$ is a holomorphic quadratic form on $\comp^n_x\times \comp^n_{\theta}$, such that ${\rm det}\, \Psi''_{x\theta} \neq 0$. Combining (\ref{app9}) and (\ref{app12}), we get therefore,
\begeq
\label{app13}
Au(x) = \widehat{a} \iint_{\Gamma} e^{2(\Psi(x,\theta) - \Psi_1(y,\theta))} u(y)\, \frac{dy\,d\theta}{(2i)^n}.
\endeq
Introducing the formal Fourier integral operator
\begeq
\label{app13.1}
\widetilde{A}v(x) = \int e^{2\Psi(x,\theta)} \widehat{a}\, v(\theta)\, d\theta,
\endeq
with the associated canonical transformation
\begeq
\label{app13.2}
\kappa_{\widetilde{A}}: \left(\theta,-\frac{2}{i}\partial_{\theta} \Psi (x,\theta)\right)\mapsto
\left(x,\frac{2}{i}\partial_x \Psi(x,\theta)\right),
\endeq
we obtain the factorization
\begeq
\label{app14}
\kappa_A = \kappa_{\widetilde{A}}\circ \kappa_{2\Psi_1/i}^{-1}.
\endeq
Here we have set, similarly to (\ref{app6}),
\begeq
\label{app15}
\kappa_{2\Psi_1/i}: \left(\theta,-\frac{2}{i}\partial_{\theta} \Psi_1 (y,\theta)\right)\mapsto
\left(y,\frac{2}{i}\partial_y \Psi_1(y,\theta)\right).
\endeq

\bigskip
\noindent
The Hilbert space adjoint $A^*: H_{\Phi_2}(\comp^n) \rightarrow H_{\Phi_1}(\comp^n)$ of $A$ in (\ref{app7.1}) satisfies
\begeq
\label{app17}
A^*v(y) = \int\!\!\!\int K_{A^*}(y,\overline{x}) v(x) e^{-2\Phi_2(x)}\, \frac{dx\,d\overline{x}}{(2i)^n},
\endeq
where $K_{A^*}(y,\overline{x}) = \overline{K_A(x,\overline{y})}$, so that
\begeq
\label{app18}
K_{A^*}(y,x) = \overline{K_A(\overline{x},\overline{y})}.
\endeq
We shall now compute the canonical transformation $\kappa_{A^*}$ associated to the Fourier integral operator $A^*$. When doing so, recalling the notation $\dagger(z) = \overline{z}$, $z\in \comp^N$, let us put $f^\dagger = \dagger\circ f\circ \dagger$, if $f$ is a continuous function on $\comp^N$. When $f$ is of class $C^1$, we have $\partial (\dagger \circ f)=\dagger\circ \overline{\partial }f$,
$\overline{\partial }(\dagger \circ f)=\dagger\circ \partial f$. We have the analogous relations for the composition with $\dagger$  to the right and it follows that
\begeq
\label{app18.1}
\partial (f^\dagger )=(\partial f)^\dagger ,\ \overline{\partial}(f^\dagger )=(\overline{\partial }f)^\dagger.
\endeq

\medskip
\noindent
From (\ref{app12}) and (\ref{app18}), we get $K_{A^*}(y,x) = \widetilde{a}\, e^{2\Psi^{\dagger}(x,y)}$, for some $\widetilde{a}\in \comp$, and combining this with (\ref{app17}), we get that
\begeq
\label{app19}
A^*v(y) = \widetilde{a}\iint_{\Gamma} e^{2(\Psi^{\dagger}(\theta,y) - \Psi_2(x,\theta))} v(x)\, \frac{dx\,d\theta}{(2i)^n}.
\endeq
Here the contour $\Gamma$ given by $\theta = \overline{x}$ and $\Psi_2 = \Phi_2^{\pi}$ is the polarization of $\Phi_2$. Associated to the formal Fourier integral operator
\begeq
\label{app20}
\widetilde{A^*}v(y) = \widetilde{a} \int e^{2\Psi^{\dagger}(\theta,y)} v(\theta)\, d\theta
\endeq
is the canonical transformation
\begeq
\label{app21}
\kappa_{\widetilde{A^*}}: \left(\theta,-\frac{2}{i}\partial_{\theta} \Psi^{\dagger}(\theta,y)\right) \mapsto \left(y,\frac{2}{i}\partial_y \Psi^{\dagger}(\theta,y)\right),
\endeq
and it follows from (\ref{app19}), similarly to (\ref{app14}), that the following factorization holds,
\begeq
\label{app22}
\kappa_{A^*} = \kappa_{\widetilde{A^*}}\circ \kappa_{2\Psi_2/i}^{-1}.
\endeq
Here $\kappa_{2\Psi_2/i}$ is defined similarly to (\ref{app15}). We shall now simplify (\ref{app21}). In view of (\ref{app18.1}) we have
\begeq
\label{app23}
\kappa_{\widetilde{A^*}}: \dagger\left(\dagger \theta,\frac{2}{i}\partial_{\theta} \Psi(\dagger \theta,\dagger y)\right) \mapsto \dagger \left(\dagger y,-\frac{2}{i}\partial_y \Psi(\dagger \theta,\dagger y)\right),
\endeq
and replacing $\dagger \theta$, $\dagger y$ by $\theta$, $y$, we get
\begeq
\label{app24}
\kappa_{\widetilde{A^*}}: \dagger\left(\theta,\frac{2}{i}\partial_{\theta} \Psi(\theta,y)\right) \mapsto \dagger \left(y,-\frac{2}{i}\partial_y \Psi(\theta,y)\right).
\endeq
Using the fact that $\dagger^{-1} =\dagger$ together with (\ref{app13.2}), (\ref{app24}) we obtain that
\begeq
\label{app25}
\dagger \circ \kappa_{\widetilde{A^*}} \circ \dagger = \kappa_{\widetilde{A}}^{-1}.
\endeq
We combine (\ref{app25}) with (\ref{app14}), (\ref{app22}), and get
$$
\dagger \circ \kappa_{A^*} \circ \kappa_{2\Psi_2/i} \circ \dagger = \kappa_{2\Psi_1/i}^{-1} \circ \kappa_A^{-1},
$$
\begeq
\label{app26}
\left(\kappa_{2\Psi_1/i}\circ \dagger\right) \circ \kappa_{A^*} \circ \left(\kappa_{2\Psi_2/i} \circ \dagger\right) = \kappa_A^{-1}.
\endeq
Here (\ref{app15}) gives that
$$
\kappa_{2\Psi_1/i} \circ \dagger: \left(\overline{y},\frac{2}{i}\overline{\partial_y \Psi_1(x,y)}\right) \mapsto \left(x, \frac{2}{i} \partial_x \Psi_1(x,y)\right),
$$
or after the substitution $y\mapsto \overline{y}$,
\begeq
\label{app27}
\kappa_{2\Psi_1/i} \circ \dagger: \left(y,\frac{2}{i}\overline{\partial_y \Psi_1(x,\overline{y})}\right) \mapsto \left(x,
\frac{2}{i} \partial_x \Psi_1(x,\overline{y})\right).
\endeq
Recalling~\cite[equation (2.4)]{CoHiSj}, we conclude that $\kappa_{2\Psi_1/i} \circ \dagger = \iota_{\Phi_1}$ is the unique antilinear involution
$$
\iota_{\Phi_1}: \comp^{2n} \rightarrow \comp^{2n},
$$
which is equal to the identity on the maximally totally real subspace $\Lambda_{\Phi_1} \subset \comp^{2n}$. Similarly,
$\kappa_{2\Psi_2/i} \circ \dagger = \iota_{\Phi_2}$ is the unique antilinear involution on $\comp^{2n}$ which agrees with the identity on $\Lambda_{\Phi_2} \subset \comp^{2n}$. We get from (\ref{app26}) that
$$
\iota_{\Phi_1} \circ \kappa_{A^*} \circ \iota_{\Phi_2} = \kappa_{A}^{-1},
$$
or in other words,
\begeq
\label{app28}
\kappa_{A^*} = \iota_{\Phi_1} \circ \kappa_A^{-1} \circ \iota_{\Phi_2}.
\endeq

\bigskip
\noindent
The discussion in the appendix can be summarized in the following result.
\begin{theo}
Let $\Phi_j$ be strictly plurisubharmonic quadratic forms on $\comp^n$, $j=1,2$, and $\kappa: \comp^{2n} \rightarrow \comp^{2n}$ be a complex linear canonical transformation which is positive relative to $(\Lambda_{\Phi_2}, \Lambda_{\Phi_1})$, in the sense that {\rm (\ref{app7.0.1})} holds. Let $A: H_{\Phi_1}(\comp^n) \rightarrow H_{\Phi_2}(\comp^n)$ be a realization of a Fourier integral operator associated to the canonical transformation $\kappa = \kappa_A$. The canonical transformation $\kappa_{A^*}$ associated to the complex adjoint $A^*: H_{\Phi_2}(\comp^n) \rightarrow H_{\Phi_1}(\comp^n)$ of $A$ is given by
\begeq
\label{app29}
\kappa_{A^*} = \iota_{\Phi_1} \circ \kappa_A^{-1} \circ \iota_{\Phi_2}.
\endeq
Here $\iota_{\Phi_j}$ is the anti-holomorphic reflection in $\Lambda_{\Phi_j}$, $j=1,2$.
\end{theo}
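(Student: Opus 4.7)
The plan is to reduce the computation of $\kappa_{A^*}$ to an algebraic manipulation on the level of kernels and formal phase functions, following the framework already developed for $A$. First I would recall that $A$ can be represented as an integral against a holomorphic kernel of the form $K_A(x,\theta) = \widehat{a}\, e^{2\Psi(x,\theta)}$ with $\det \Psi''_{x\theta}\neq 0$, yielding the factorization $\kappa_A = \kappa_{\widetilde{A}} \circ \kappa_{2\Psi_1/i}^{-1}$, where $\kappa_{\widetilde{A}}$ is the canonical transformation generated by $2\Psi/i$. The task is then to carry out the analogous analysis for $A^*$ and show that the resulting formula collapses to \eqref{app29}.

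The next step is to use the defining relation $\langle Au, v\rangle_{H_{\Phi_2}} = \langle u, A^*v\rangle_{H_{\Phi_1}}$ together with the reproducing property of $\Pi_{\Phi_1}$ to obtain the kernel identity $K_{A^*}(y,x) = \overline{K_A(\overline{x},\overline{y})}$. Substituting the exponential form of $K_A$ yields $K_{A^*}(y,x) = \widetilde{a}\, e^{2\Psi^\dagger(x,y)}$ for some constant $\widetilde{a}\neq 0$, where $\Psi^\dagger = \dagger\circ \Psi\circ \dagger$. From this one reads off $\kappa_{A^*} = \kappa_{\widetilde{A^*}}\circ \kappa_{2\Psi_2/i}^{-1}$, with $\kappa_{\widetilde{A^*}}$ generated by $2\Psi^\dagger/i$ as in \eqref{app21}.

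The heart of the argument is now the identity $\dagger\circ \kappa_{\widetilde{A^*}}\circ \dagger = \kappa_{\widetilde{A}}^{-1}$. This follows by direct computation, using \eqref{app18.1} to move $\dagger$ through the holomorphic differentials appearing in the generating equations, and comparing with \eqref{app13.2} read in reverse. Combining this with the two factorizations gives
\[
\dagger \circ \kappa_{A^*} \circ \kappa_{2\Psi_2/i} \circ \dagger = \kappa_{2\Psi_1/i}^{-1}\circ \kappa_A^{-1},
\]
which I would rewrite as $(\kappa_{2\Psi_1/i}\circ \dagger)\circ \kappa_{A^*}\circ (\kappa_{2\Psi_2/i}\circ \dagger) = \kappa_A^{-1}$.

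The final step is the identification $\kappa_{2\Psi_j/i}\circ \dagger = \iota_{\Phi_j}$, which was verified along the lines of \eqref{app27} by noting that the right hand side is the unique antilinear involution fixing $\Lambda_{\Phi_j}$ pointwise. Substituting $j=1,2$ into the previous display yields $\iota_{\Phi_1}\circ \kappa_{A^*}\circ \iota_{\Phi_2} = \kappa_A^{-1}$, and inverting the involutions (using $\iota_{\Phi_j}^2 = 1$) gives \eqref{app29}. The main obstacle is really the bookkeeping in the second step: keeping track of which variables are holomorphic, which are anti-holomorphic, and ensuring that the formal Fourier integral operators $\widetilde{A}, \widetilde{A^*}$ are treated consistently so that \eqref{app18.1} applies cleanly. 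Everything else is linear-algebraic manipulation of canonical transformations.
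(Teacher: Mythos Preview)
Your proposal is correct and follows essentially the same approach as the paper: the kernel identity $K_{A^*}(y,x)=\overline{K_A(\overline{x},\overline{y})}$, the factorizations $\kappa_A=\kappa_{\widetilde A}\circ\kappa_{2\Psi_1/i}^{-1}$ and $\kappa_{A^*}=\kappa_{\widetilde{A^*}}\circ\kappa_{2\Psi_2/i}^{-1}$, the key conjugation identity $\dagger\circ\kappa_{\widetilde{A^*}}\circ\dagger=\kappa_{\widetilde A}^{-1}$, and the identification $\kappa_{2\Psi_j/i}\circ\dagger=\iota_{\Phi_j}$ are exactly the steps carried out in the appendix. The only cosmetic difference is that the paper spells out the intermediate computation \eqref{app23}--\eqref{app24} explicitly, while you summarize it as a direct application of \eqref{app18.1}.
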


\medskip
\noindent
{\it Remark}. It follows from (\ref{app29}) and the positivity of $\kappa_A$ relative to $(\Lambda_{\Phi_2}, \Lambda_{\Phi_1})$ that $\kappa_{A^*}$ is positive relative to $(\Lambda_{\Phi_1},\Lambda_{\Phi_2})$,
\begeq
\label{app30}
\frac{1}{i} \biggl(\sigma(\kappa_{A^*}(\rho), \iota_{\Phi_1} \kappa_{A^*}(\rho)) - \sigma(\rho, \iota_{\Phi_2}(\rho))\biggr) \geq 0,\quad \rho \in \comp^{2n}.
\endeq

\end{appendix}

\end{document}